\DeclareFontFamily{OT1}{pzc}{}
\DeclareFontShape{OT1}{pzc}{m}{it}{<-> s * [1.10] pzcmi7t}{}
\DeclareMathAlphabet{\mathpzc}{OT1}{pzc}{m}{it}
\DeclareMathAlphabet{\pazocal}{OMS}{zplm}{m}{n}
\newtheorem{definition}{Definition}[section]
\newtheorem{theorem}{Theorem}[section]
\newtheorem{proposition}{Proposition}[section]
\newtheorem{lemma}{Lemma}[section]
\newtheorem{remark}{Remark}[section]
\let\join=\bowtie
\let\pa=\partial
\let\sb=\subset
\let\fa=\forall
\let\tim=\times
\let\mx=\mbox
\let\sm=\setminus
\let\ol=\overline
\let\ii=\infty
\let\rhu=\rightharpoonup
\def\gr{{\mathbb{R}}}
\def\bbM{{\mathbb{M}}}
\newcommand{\fk}[1]{\mathfrak{#1}}
\def\GM{{\mathbf{M}}}
\def\Gt{{\mathbf{t}}}
\def\cA{{\mathcal{A}}}
\def\cB{{\mathcal{B}}}
\def\cD{{\mathcal{D}}}
\def\cE{{\mathcal{E}}}
\def\cF{{\mathcal{F}}}
\def\cG{{\mathcal{G}}}
\def\cH{{\mathcal{H}}}
\def\cL{{\mathcal{L}}}
\def\cR{{\mathcal{R}}}
\def\cT{{\mathcal{T}}}
\let\a=\alpha
\let\b=\beta
\let\e=\epsilon
\let\m=\mu
\let\n=\nu
\let\o=\omega
\let\r=\rho
\let\s=\sigma
\let\t=\theta
\let\tt=\tau
\let\y=\eta
\let\vf=\varphi
\let\D=\Delta
\let\O=\Omega
\let\G=\Gamma
\newcommand{\set}[1]{\{#1\}}
\newcommand{\Set}[1]{\Bigl\{#1\Bigr\}}
\def\wid{\widetilde}
\def\div{\mathrm{div}}
\def\tr{\mathrm{tr}\,}
\newcommand{\pri}{\hbox to 10pt{\hfil\hbox to 0.4pt{%
			\vrule height5pt width0.4pt depth0pt}%
		\vrule width4pt height0.4pt depth0pt\hfil}}
\def\coxn#1#2{
	\ifcat#21%
	\ifcase#2\or\or
	#1^1,\,#1^2\or
	#1^1,\,#1^2,\,#1^3\or
	#1^1,\,#1^2,\,#1^3,\,#1^4
	\fi
	\else
	#1^1,\,#1^2,\dots,\,#1^#2
	\fi}
\def\xn#1#2{
	\ifcat#21%
	\ifcase#2\or\or
	#1_1,\,#1_2\or
	#1_1,\,#1_2,\,#1_3\or
	#1_1,\,#1_2,\,#1_3,\,#1_4
	\fi
	\else
	#1_1,\,#1_2,\dots,\,#1_#2
	\fi}
\def\cox2n#1#2#3{
	\ifcat#31%
	\ifcase#3\or\or
	#1_1#2_1,\,#1_2#2_2\or
	#1_1#2_1,\,#1_2#2_2,\,#1_3#2_3\or
	#1_1#2_1,\,#1_2#2_2,\,#1_3#2_3,\,#1_4#2_4
	\fi
	\else
	#1_1#2_1,\,#1_2#2_2,\dots,\,#1_#3#2_#3
	\fi}
\def\vx2n#1#2#3{(\cox2n{#1}{#2}{#3})}
\makeatletter \@addtoreset{equation}{section} \makeatother
\def\theequation{\arabic{section}.\arabic{equation}}
\def\pri{\hbox to 10pt{\hfil\hbox to 0.4pt{\vrule height5pt width0.4pt
			depth0pt}\vrule width5pt height0.4pt depth0pt\hfil}}
\let\pdfoutput=\undefined\fi
\chardef\@x10\chardef\@xv60
\def\tcitime{
\def\@time{%
  \@minute\time\@hour\@minute\divide\@hour\@xv
  \ifnum\@hour<\@x 0\fi\the\@hour:%
  \multiply\@hour\@xv\advance\@minute-\@hour
  \ifnum\@minute<\@x 0\fi\the\@minute
  }}%
\def\x@hyperref#1#2#3{%
   \catcode`\~ = 12
   \catcode`\$ = 12
   \catcode`\_ = 12
   \catcode`\# = 12
   \catcode`\& = 12
   \y@hyperref{#1}{#2}{#3}%
}
\def\y@hyperref#1#2#3#4{%
   #2\ref{#4}#3
   \catcode`\~ = 13
   \catcode`\$ = 3
   \catcode`\_ = 8
   \catcode`\# = 6
   \catcode`\& = 4
}
\def\QCTOpt[#1]#2{%
  \def\QCTOptB{#1}
  \def\QCTOptA{#2}
}
\def\QCTNOpt#1{%
  \def\QCTOptA{#1}
  \let\QCTOptB\empty
}
\def\Qct{%
  \@ifnextchar[{%
    \QCTOpt}{\QCTNOpt}
}
\def\QCBOpt[#1]#2{%
  \def\QCBOptB{#1}%
  \def\QCBOptA{#2}%
}
\def\QCBNOpt#1{%
  \def\QCBOptA{#1}%
  \let\QCBOptB\empty
}
\def\Qcb{%
  \@ifnextchar[{%
    \QCBOpt}{\QCBNOpt}%
}
\def\PrepCapArgs{%
  \ifx\QCBOptA\empty
    \ifx\QCTOptA\empty
      {}%
    \else
      \ifx\QCTOptB\empty
        {\QCTOptA}%
      \else
        [\QCTOptB]{\QCTOptA}%
      \fi
    \fi
  \else
    \ifx\QCBOptA\empty
      {}%
    \else
      \ifx\QCBOptB\empty
        {\QCBOptA}%
      \else
        [\QCBOptB]{\QCBOptA}%
      \fi
    \fi
  \fi
}
\def\GRAPHICSPS#1{%
 \ifcase\GRAPHICSTYPE
   \special{ps: #1}%
 \or
   \special{language "PS", include "#1"}%
 \fi
}%
\def\graffile#1#2#3#4{%
    \bgroup
	   \@inlabelfalse
       \leavevmode
       \@ifundefined{bbl@deactivate}{\def~{\string~}}{\activesoff}%
        \raise -#4 \BOXTHEFRAME{%
           \hbox to #2{\raise #3\hbox to #2{\null #1\hfil}}}%
    \egroup
}%
\def\draftbox#1#2#3#4{%
 \leavevmode\raise -#4 \hbox{%
  \frame{\rlap{\protect\tiny #1}\hbox to #2%
   {\vrule height#3 width\z@ depth\z@\hfil}%
  }%
 }%
}%
\let\nographics=\@msidraft
\newif\ifwasdraft
\def\GRAPHIC#1#2#3#4#5{%
   \ifnum\@msidraft=\@ne\draftbox{#2}{#3}{#4}{#5}%
   \else\graffile{#1}{#3}{#4}{#5}%
   \fi
}
\def\addtoLaTeXparams#1{%
    \edef\LaTeXparams{\LaTeXparams #1}}%
\newif\ifBoxFrame \BoxFramefalse
\newif\ifOverFrame \OverFramefalse
\newif\ifUnderFrame \UnderFramefalse
\def\BOXTHEFRAME#1{%
   \hbox{%
      \ifBoxFrame
         \frame{#1}%
      \else
         {#1}%
      \fi
   }%
}
\def\doFRAMEparams#1{\BoxFramefalse\OverFramefalse\UnderFramefalse\readFRAMEparams#1\end}%
\def\readFRAMEparams#1{%
 \ifx#1\end%
  \let\next=\relax
  \else
  \ifx#1i\dispkind=\z@\fi
  \ifx#1d\dispkind=\@ne\fi
  \ifx#1f\dispkind=\tw@\fi
  \ifx#1t\addtoLaTeXparams{t}\fi
  \ifx#1b\addtoLaTeXparams{b}\fi
  \ifx#1p\addtoLaTeXparams{p}\fi
  \ifx#1h\addtoLaTeXparams{h}\fi
  \ifx#1X\BoxFrametrue\fi
  \ifx#1O\OverFrametrue\fi
  \ifx#1U\UnderFrametrue\fi
  \ifx#1w
    \ifnum\@msidraft=1\wasdrafttrue\else\wasdraftfalse\fi
    \@msidraft=\@ne
  \fi
  \let\next=\readFRAMEparams
  \fi
 \next
 }%
\def\IFRAME#1#2#3#4#5#6{%
      \bgroup
      \let\QCTOptA\empty
      \let\QCTOptB\empty
      \let\QCBOptA\empty
      \let\QCBOptB\empty
      #6%
      \parindent=0pt
      \leftskip=0pt
      \rightskip=0pt
      \setbox0=\hbox{\QCBOptA}%
      \@tempdima=#1\relax
      \ifOverFrame
          \typeout{This is not implemented yet}%
          \show\HELP
      \else
         \ifdim\wd0>\@tempdima
            \advance\@tempdima by \@tempdima
            \ifdim\wd0 >\@tempdima
               \setbox1 =\vbox{%
                  \unskip\hbox to \@tempdima{\hfill\GRAPHIC{#5}{#4}{#1}{#2}{#3}\hfill}%
                  \unskip\hbox to \@tempdima{\parbox[b]{\@tempdima}{\QCBOptA}}%
               }%
               \wd1=\@tempdima
            \else
               \textwidth=\wd0
               \setbox1 =\vbox{%
                 \noindent\hbox to \wd0{\hfill\GRAPHIC{#5}{#4}{#1}{#2}{#3}\hfill}\\%
                 \noindent\hbox{\QCBOptA}%
               }%
               \wd1=\wd0
            \fi
         \else
            \ifdim\wd0>0pt
              \hsize=\@tempdima
              \setbox1=\vbox{%
                \unskip\GRAPHIC{#5}{#4}{#1}{#2}{0pt}%
                \break
                \unskip\hbox to \@tempdima{\hfill \QCBOptA\hfill}%
              }%
              \wd1=\@tempdima
           \else
              \hsize=\@tempdima
              \setbox1=\vbox{%
                \unskip\GRAPHIC{#5}{#4}{#1}{#2}{0pt}%
              }%
              \wd1=\@tempdima
           \fi
         \fi
         \@tempdimb=\ht1
         \advance\@tempdimb by -#2
         \advance\@tempdimb by #3
         \leavevmode
         \raise -\@tempdimb \hbox{\box1}%
      \fi
      \egroup%
}%
\def\DFRAME#1#2#3#4#5{%
  \vspace\topsep
  \hfil\break
  \bgroup
     \leftskip\@flushglue
	 \rightskip\@flushglue
	 \parindent\z@
	 \parfillskip\z@skip
     \let\QCTOptA\empty
     \let\QCTOptB\empty
     \let\QCBOptA\empty
     \let\QCBOptB\empty
	 \vbox\bgroup
        \ifOverFrame 
           #5\QCTOptA\par
        \fi
        \GRAPHIC{#4}{#3}{#1}{#2}{\z@}%
        \ifUnderFrame 
           \break#5\QCBOptA
        \fi
	 \egroup
  \egroup
  \vspace\topsep
  \break
}%
\def\FFRAME#1#2#3#4#5#6#7{%
  \@ifundefined{floatstyle}
    {
     \begin{figure}[#1]%
    }
    {
	 \ifx#1h
      \begin{figure}[H]%
	 \else
      \begin{figure}[#1]%
	 \fi
	}
  \let\QCTOptA\empty
  \let\QCTOptB\empty
  \let\QCBOptA\empty
  \let\QCBOptB\empty
  \ifOverFrame
    #4
    \ifx\QCTOptA\empty
    \else
      \ifx\QCTOptB\empty
        \caption{\QCTOptA}%
      \else
        \caption[\QCTOptB]{\QCTOptA}%
      \fi
    \fi
    \ifUnderFrame\else
      \label{#5}%
    \fi
  \else
    \UnderFrametrue%
  \fi
  \begin{center}\GRAPHIC{#7}{#6}{#2}{#3}{\z@}\end{center}%
  \ifUnderFrame
    #4
    \ifx\QCBOptA\empty
      \caption{}%
    \else
      \ifx\QCBOptB\empty
        \caption{\QCBOptA}%
      \else
        \caption[\QCBOptB]{\QCBOptA}%
      \fi
    \fi
    \label{#5}%
  \fi
  \end{figure}%
 }%
\def\makeactives{
  \catcode`\"=\active
  \catcode`\;=\active
  \catcode`\:=\active
  \catcode`\'=\active
  \catcode`\~=\active
}
   \gdef\activesoff{%
      \def"{\string"}%
      \def;{\string;}%
      \def:{\string:}%
      \def'{\string'}%
      \def~{\string~}%
    }
\def\FRAME#1#2#3#4#5#6#7#8{%
 \bgroup
 \ifnum\@msidraft=\@ne
   \wasdrafttrue
 \else
   \wasdraftfalse%
 \fi
 \def\LaTeXparams{}%
 \dispkind=\z@
 \def\LaTeXparams{}%
 \doFRAMEparams{#1}%
 \ifnum\dispkind=\z@\IFRAME{#2}{#3}{#4}{#7}{#8}{#5}\else
  \ifnum\dispkind=\@ne\DFRAME{#2}{#3}{#7}{#8}{#5}\else
   \ifnum\dispkind=\tw@
    \edef\@tempa{\noexpand\FFRAME{\LaTeXparams}}%
    \@tempa{#2}{#3}{#5}{#6}{#7}{#8}%
    \fi
   \fi
  \fi
  \ifwasdraft\@msidraft=1\else\@msidraft=0\fi{}%
  \egroup
 }%
\def\TEXUX#1{"texux"}
\long\def\QQQ#1#2{%
     \long\expandafter\def\csname#1\endcsname{#2}}%
\long\def\QQA#1#2{}%
\def\QTR#1#2{{\csname#1\endcsname {#2}}}%
\def\EXPAND#1[#2]#3{}%
\def\NOEXPAND#1[#2]#3{}%
\def\LaTeXparent#1{}%
\def\ChildStyles#1{}%
\def\ChildDefaults#1{}%
\def\QTagDef#1#2#3{}%
  \providecommand{\UNICODE}[2][]{\protect\rule{.1in}{.1in}}
  \providecommand{\U}[1]{\protect\rule{.1in}{.1in}}
\def\QQfnmark#1{\footnotemark}
 \def\abstract{%
  \if@twocolumn
   \section*{Abstract (Not appropriate in this style!)}%
   \else \small 
   \begin{center}{\bf Abstract\vspace{-.5em}\vspace{\z@}}\end{center}%
   \quotation 
   \fi
  }%
   \def\registered{\relax\ifmmode{}\r@gistered
                    \else$\m@th\r@gistered$\fi}%
 \def\r@gistered{^{\ooalign
  {\hfil\raise.07ex\hbox{$\scriptstyle\rm\text{R}$}\hfil\crcr
  \mathhexbox20D}}}}{}%
\newdimen\theight
\def\newfmtname{LaTeX2e}
  \DeclareOldFontCommand{\rm}{\normalfont\rmfamily}{\mathrm}
  \DeclareOldFontCommand{\sf}{\normalfont\sffamily}{\mathsf}
  \DeclareOldFontCommand{\tt}{\normalfont\ttfamily}{\mathtt}
  \DeclareOldFontCommand{\bf}{\normalfont\bfseries}{\mathbf}
  \DeclareOldFontCommand{\it}{\normalfont\itshape}{\mathit}
  \DeclareOldFontCommand{\sl}{\normalfont\slshape}{\@nomath\sl}
  \DeclareOldFontCommand{\sc}{\normalfont\scshape}{\@nomath\sc}
\def\alpha{{\Greekmath 010B}}%
\def\beta{{\Greekmath 010C}}%
\def\gamma{{\Greekmath 010D}}%
\def\delta{{\Greekmath 010E}}%
\def\epsilon{{\Greekmath 010F}}%
\def\zeta{{\Greekmath 0110}}%
\def\eta{{\Greekmath 0111}}%
\def\theta{{\Greekmath 0112}}%
\def\iota{{\Greekmath 0113}}%
\def\kappa{{\Greekmath 0114}}%
\def\lambda{{\Greekmath 0115}}%
\def\mu{{\Greekmath 0116}}%
\def\nu{{\Greekmath 0117}}%
\def\xi{{\Greekmath 0118}}%
\def\pi{{\Greekmath 0119}}%
\def\rho{{\Greekmath 011A}}%
\def\sigma{{\Greekmath 011B}}%
\def\tau{{\Greekmath 011C}}%
\def\upsilon{{\Greekmath 011D}}%
\def\phi{{\Greekmath 011E}}%
\def\chi{{\Greekmath 011F}}%
\def\psi{{\Greekmath 0120}}%
\def\omega{{\Greekmath 0121}}%
\def\varepsilon{{\Greekmath 0122}}%
\def\vartheta{{\Greekmath 0123}}%
\def\varpi{{\Greekmath 0124}}%
\def\varrho{{\Greekmath 0125}}%
\def\varsigma{{\Greekmath 0126}}%
\def\varphi{{\Greekmath 0127}}%
\def\nabla{{\Greekmath 0272}}
\def\FindBoldGroup{%
   {\setbox0=\hbox{$\mathbf{x\global\edef\theboldgroup{\the\mathgroup}}$}}%
}
\def\Greekmath#1#2#3#4{%
    \if@compatibility
        \ifnum\mathgroup=\symbold
           \mathchoice{\mbox{\boldmath$\displaystyle\mathchar"#1#2#3#4$}}%
                      {\mbox{\boldmath$\textstyle\mathchar"#1#2#3#4$}}%
                      {\mbox{\boldmath$\scriptstyle\mathchar"#1#2#3#4$}}%
                      {\mbox{\boldmath$\scriptscriptstyle\mathchar"#1#2#3#4$}}%
        \else
           \mathchar"#1#2#3#4%
        \fi 
    \else 
        \FindBoldGroup
        \ifnum\mathgroup=\theboldgroup 
           \mathchoice{\mbox{\boldmath$\displaystyle\mathchar"#1#2#3#4$}}%
                      {\mbox{\boldmath$\textstyle\mathchar"#1#2#3#4$}}%
                      {\mbox{\boldmath$\scriptstyle\mathchar"#1#2#3#4$}}%
                      {\mbox{\boldmath$\scriptscriptstyle\mathchar"#1#2#3#4$}}%
        \else
           \mathchar"#1#2#3#4%
        \fi     	    
	  \fi}
\newif\ifGreekBold  \GreekBoldfalse
\let\SAVEPBF=\pbf
\def\pbf{\GreekBoldtrue\SAVEPBF}%
  \newcounter{equationnumber}  
  \def\mathletters{%
     \addtocounter{equation}{1}
     \edef\@currentlabel{\theequation}%
     \setcounter{equationnumber}{\c@equation}
     \setcounter{equation}{0}%
     \edef\theequation{\@currentlabel\noexpand\alph{equation}}%
  }
    \def\BibTeX{{\rm B\kern-.05em{\sc i\kern-.025em b}\kern-.08em
                 T\kern-.1667em\lower.7ex\hbox{E}\kern-.125emX}}}{}%
\def\AmS{{\protect\usefont{OMS}{cmsy}{m}{n}%
                A\kern-.1667em\lower.5ex\hbox{M}\kern-.125emS}}}{}%
\def\@@eqncr{\let\@tempa\relax
    \ifcase\@eqcnt \def\@tempa{& & &}\or \def\@tempa{& &}%
      \else \def\@tempa{&}\fi
     \@tempa
     \if@eqnsw
        \iftag@
           \@taggnum
        \else
           \@eqnnum\stepcounter{equation}%
        \fi
     \fi
     \global\tag@false
     \global\@eqnswtrue
     \global\@eqcnt\z@\cr}
\def\TCItag{\@ifnextchar*{\@TCItagstar}{\@TCItag}}
\def\@TCItag#1{%
    \global\tag@true
    \global\def\@taggnum{(#1)}%
    \global\def\@currentlabel{#1}}
\def\@TCItagstar*#1{%
    \global\tag@true
    \global\def\@taggnum{#1}%
    \global\def\@currentlabel{#1}}
\def\ExitTCILatex{\makeatother }
\if@compatibility\message{amsmath already loaded}\fi\aftergroup\ExitTCILatex}
\if@compatibility\message{amstex already loaded}\fi\aftergroup\ExitTCILatex}
\if@compatibility\message{amsgen already loaded}\fi\aftergroup\ExitTCILatex}
\let\DOTSI\relax
\def\RIfM@{\relax\ifmmode}%
\def\FN@{\futurelet\next}%
\def\iint{\DOTSI\intno@\tw@\FN@\ints@}%
\def\iiint{\DOTSI\intno@\thr@@\FN@\ints@}%
\def\iiiint{\DOTSI\intno@4 \FN@\ints@}%
\def\idotsint{\DOTSI\intno@\z@\FN@\ints@}%
\def\ints@{\findlimits@\ints@@}%
\newif\iflimtoken@
\newif\iflimits@
\def\findlimits@{\limtoken@true\ifx\next\limits\limits@true
 \else\ifx\next\nolimits\limits@false\else
 \limtoken@false\ifx\ilimits@\nolimits\limits@false\else
 \ifinner\limits@false\else\limits@true\fi\fi\fi\fi}%
\def\multint@{\int\ifnum\intno@=\z@\intdots@                          
 \else\intkern@\fi                                                    
 \ifnum\intno@>\tw@\int\intkern@\fi                                   
 \ifnum\intno@>\thr@@\int\intkern@\fi                                 
 \int}
\def\multintlimits@{\intop\ifnum\intno@=\z@\intdots@\else\intkern@\fi
 \ifnum\intno@>\tw@\intop\intkern@\fi
 \ifnum\intno@>\thr@@\intop\intkern@\fi\intop}%
\def\intic@{%
    \mathchoice{\hskip.5em}{\hskip.4em}{\hskip.4em}{\hskip.4em}}%
\def\negintic@{\mathchoice
 {\hskip-.5em}{\hskip-.4em}{\hskip-.4em}{\hskip-.4em}}%
\def\ints@@{\iflimtoken@                                              
 \def\ints@@@{\iflimits@\negintic@
   \mathop{\intic@\multintlimits@}\limits                             
  \else\multint@\nolimits\fi                                          
  \eat@}
 \else                                                                
 \def\ints@@@{\iflimits@\negintic@
  \mathop{\intic@\multintlimits@}\limits\else
  \multint@\nolimits\fi}\fi\ints@@@}%
\def\intkern@{\mathchoice{\!\!\!}{\!\!}{\!\!}{\!\!}}%
\def\plaincdots@{\mathinner{\cdotp\cdotp\cdotp}}%
\def\intdots@{\mathchoice{\plaincdots@}%
 {{\cdotp}\mkern1.5mu{\cdotp}\mkern1.5mu{\cdotp}}%
 {{\cdotp}\mkern1mu{\cdotp}\mkern1mu{\cdotp}}%
 {{\cdotp}\mkern1mu{\cdotp}\mkern1mu{\cdotp}}}%
\def\RIfM@{\relax\protect\ifmmode}
\def\text{\RIfM@\expandafter\text@\else\expandafter\mbox\fi}
\let\nfss@text\text
\def\text@#1{\mathchoice
   {\textdef@\displaystyle\f@size{#1}}%
   {\textdef@\textstyle\tf@size{\firstchoice@false #1}}%
   {\textdef@\textstyle\sf@size{\firstchoice@false #1}}%
   {\textdef@\textstyle \ssf@size{\firstchoice@false #1}}%
   \glb@settings}
\def\textdef@#1#2#3{\hbox{{%
                    \everymath{#1}%
                    \let\f@size#2\selectfont
                    #3}}}
\newif\iffirstchoice@
\def\Let@{\relax\iffalse{\fi\let\\=\cr\iffalse}\fi}%
\def\vspace@{\def\vspace##1{\crcr\noalign{\vskip##1\relax}}}%
\def\multilimits@{\bgroup\vspace@\Let@
 \baselineskip\fontdimen10 \scriptfont\tw@
 \advance\baselineskip\fontdimen12 \scriptfont\tw@
 \lineskip\thr@@\fontdimen8 \scriptfont\thr@@
 \lineskiplimit\lineskip
 \vbox\bgroup\ialign\bgroup\hfil$\m@th\scriptstyle{##}$\hfil\crcr}%
\def\Sb{_\multilimits@}%
\def\endSb{\crcr\egroup\egroup\egroup}%
\def\Sp{^\multilimits@}%
\newdimen\ex@
\def\rightarrowfill@#1{$#1\m@th\mathord-\mkern-6mu\cleaders
 \hbox{$#1\mkern-2mu\mathord-\mkern-2mu$}\hfill
 \mkern-6mu\mathord\rightarrow$}%
\def\leftarrowfill@#1{$#1\m@th\mathord\leftarrow\mkern-6mu\cleaders
 \hbox{$#1\mkern-2mu\mathord-\mkern-2mu$}\hfill\mkern-6mu\mathord-$}%
\def\leftrightarrowfill@#1{$#1\m@th\mathord\leftarrow
\mkern-6mu\cleaders
 \hbox{$#1\mkern-2mu\mathord-\mkern-2mu$}\hfill
 \mkern-6mu\mathord\rightarrow$}%
\def\overrightarrow{\mathpalette\overrightarrow@}%
\def\overrightarrow@#1#2{\vbox{\ialign{##\crcr\rightarrowfill@#1\crcr
 \noalign{\kern-\ex@\nointerlineskip}$\m@th\hfil#1#2\hfil$\crcr}}}%
\def\overleftarrow{\mathpalette\overleftarrow@}%
\def\overleftarrow@#1#2{\vbox{\ialign{##\crcr\leftarrowfill@#1\crcr
 \noalign{\kern-\ex@\nointerlineskip}$\m@th\hfil#1#2\hfil$\crcr}}}%
\def\overleftrightarrow{\mathpalette\overleftrightarrow@}%
\def\overleftrightarrow@#1#2{\vbox{\ialign{##\crcr
   \leftrightarrowfill@#1\crcr
 \noalign{\kern-\ex@\nointerlineskip}$\m@th\hfil#1#2\hfil$\crcr}}}%
\def\underrightarrow{\mathpalette\underrightarrow@}%
\def\underrightarrow@#1#2{\vtop{\ialign{##\crcr$\m@th\hfil#1#2\hfil
  $\crcr\noalign{\nointerlineskip}\rightarrowfill@#1\crcr}}}%
\def\underleftarrow{\mathpalette\underleftarrow@}%
\def\underleftarrow@#1#2{\vtop{\ialign{##\crcr$\m@th\hfil#1#2\hfil
  $\crcr\noalign{\nointerlineskip}\leftarrowfill@#1\crcr}}}%
\def\underleftrightarrow{\mathpalette\underleftrightarrow@}%
\def\underleftrightarrow@#1#2{\vtop{\ialign{##\crcr$\m@th
  \hfil#1#2\hfil$\crcr
 \noalign{\nointerlineskip}\leftrightarrowfill@#1\crcr}}}%
\def\qopnamewl@#1{\mathop{\operator@font#1}\nlimits@}
\let\nlimits@\displaylimits
\def\setboxz@h{\setbox\z@\hbox}
\def\varlim@#1#2{\mathop{\vtop{\ialign{##\crcr
 \hfil$#1\m@th\operator@font lim$\hfil\crcr
 \noalign{\nointerlineskip}#2#1\crcr
 \noalign{\nointerlineskip\kern-\ex@}\crcr}}}}
 \def\rightarrowfill@#1{\m@th\setboxz@h{$#1-$}\ht\z@\z@
  $#1\copy\z@\mkern-6mu\cleaders
  \hbox{$#1\mkern-2mu\box\z@\mkern-2mu$}\hfill
  \mkern-6mu\mathord\rightarrow$}
\def\leftarrowfill@#1{\m@th\setboxz@h{$#1-$}\ht\z@\z@
  $#1\mathord\leftarrow\mkern-6mu\cleaders
  \hbox{$#1\mkern-2mu\copy\z@\mkern-2mu$}\hfill
  \mkern-6mu\box\z@$}
\def\projlim{\qopnamewl@{proj\,lim}}
\def\injlim{\qopnamewl@{inj\,lim}}
\def\varinjlim{\mathpalette\varlim@\rightarrowfill@}
\def\varprojlim{\mathpalette\varlim@\leftarrowfill@}
\def\varliminf{\mathpalette\varliminf@{}}
\def\varliminf@#1{\mathop{\underline{\vrule\@depth.2\ex@\@width\z@
   \hbox{$#1\m@th\operator@font lim$}}}}
\def\varlimsup{\mathpalette\varlimsup@{}}
\def\varlimsup@#1{\mathop{\overline
  {\hbox{$#1\m@th\operator@font lim$}}}}
\def\align{\@verbatim \frenchspacing\@vobeyspaces \@alignverbatim
You are using the "align" environment in a style in which it is not defined.}
\let\csname endalign*\endcsname =\endtrivlist
\def\alignat{\@verbatim \frenchspacing\@vobeyspaces \@alignatverbatim
You are using the "alignat" environment in a style in which it is not defined.}
\let\csname endalignat*\endcsname =\endtrivlist
\def\xalignat{\@verbatim \frenchspacing\@vobeyspaces \@xalignatverbatim
You are using the "xalignat" environment in a style in which it is not defined.}
\let\csname endxalignat*\endcsname =\endtrivlist
\def\gather{\@verbatim \frenchspacing\@vobeyspaces \@gatherverbatim
You are using the "gather" environment in a style in which it is not defined.}
\let\csname endgather*\endcsname =\endtrivlist
\def\multiline{\@verbatim \frenchspacing\@vobeyspaces \@multilineverbatim
You are using the "multiline" environment in a style in which it is not defined.}
\let\csname endmultiline*\endcsname =\endtrivlist
\def\arrax{\@verbatim \frenchspacing\@vobeyspaces \@arraxverbatim
You are using a type of "array" construct that is only allowed in AmS-LaTeX.}
\def\tabulax{\@verbatim \frenchspacing\@vobeyspaces \@tabulaxverbatim
You are using a type of "tabular" construct that is only allowed in AmS-LaTeX.}
\let\csname endarrax*\endcsname =\endtrivlist
\let\csname endtabulax*\endcsname =\endtrivlist
 \def\endequation{%
     \ifmmode\ifinner 
      \iftag@
        \addtocounter{equation}{-1} 
        $\hfil
           \displaywidth\linewidth\@taggnum\egroup \endtrivlist
        \global\tag@false
        \global\@ignoretrue   
      \else
        $\hfil
           \displaywidth\linewidth\@eqnnum\egroup \endtrivlist
        \global\tag@false
        \global\@ignoretrue 
      \fi
     \else   
      \iftag@
        \addtocounter{equation}{-1} 
        \eqno \hbox{\@taggnum}
        \global\tag@false%
        $$\global\@ignoretrue
      \else
        \eqno \hbox{\@eqnnum}
        $$\global\@ignoretrue
      \fi
     \fi\fi
 } 
 \newif\iftag@ \tag@false
 \def\TCItag{\@ifnextchar*{\@TCItagstar}{\@TCItag}}
 \def\@TCItag#1{%
     \global\tag@true
     \global\def\@taggnum{(#1)}%
     \global\def\@currentlabel{#1}}
 \def\@TCItagstar*#1{%
     \global\tag@true
     \global\def\@taggnum{#1}%
     \global\def\@currentlabel{#1}}
     \def\tag{\@ifnextchar*{\@tagstar}{\@tag}}
     \def\@tag#1{%
         \global\tag@true
         \global\def\@taggnum{(#1)}}
     \def\@tagstar*#1{%
         \global\tag@true
         \global\def\@taggnum{#1}}
\begin{document}

\begin{frontmatter}%

\title{Crack occurrence in bodies with gradient polyconvex energies}%

\author{Martin Kru\v{z}\'{\i}k*, Paolo Maria Mariano**, Domenico Mucci***}%

\address{*Czech Academy of Sciences, Institute of Information Theory and Automation\\
	Pod Vod\'{a}renskou v\u{e}\v{z}\'{\i} 4,
	CZ-182 00 Prague 8,
	Czechia\\e-mail: kruzik@utia.cz}
\address{**DICEA, Universit\`{a} di Firenze\\via Santa Marta 3, I-50139 Firenze, Italy\\e-mail: paolomaria.mariano@unifi.it, paolo.mariano@unifi.it}%

\address{***DSMFI, Universit\`{a} di Parma\\ Parco Area delle Scienze 53/A, I-43134 Parma, Italy \\ e-mail:
domenico.mucci@unipr.it}%



\begin{abstract}
Energy minimality selects among possible configurations of a continuous body with and without cracks those compatible with assigned boundary conditions of Dirichlet-type. Crack paths are described in terms of curvature varifolds so that we consider both \textquotedblleft phase" (cracked or non-cracked) and crack orientation. The energy considered is gradient polyconvex: it accounts for relative variations of second-neighbor surfaces and pressure-confinement effects. We prove existence of minimizers for such an energy. They are pairs of deformations and varifolds. The former ones are taken to be $SBV$ maps satisfying an impenetrability condition. Their jump set is constrained to be in the varifold support.
\end{abstract}%

\begin{keyword}
Fracture, Varifolds, Ground States, Shells, Microstructures, Calculus of
Variations
\end{keyword}%

\end{frontmatter}%

\section{Introduction}
Deformation-induced material effects involving interactions beyond those of first-neighbor-type
can be accounted for by
considering, among the fields defining states,
higher-order deformation gradients.
In short, we can say that 
these effects emerge from \emph{latent} microstructures, intending those which do not strictly require to be represented by independent (observable) variables accounting for small-spatial-scale degrees of freedom. Rather they are such that \textquoteleft though its effects are felt in the balance equations, all relevant quantities can be expressed in terms of geometric quantities pertaining to apparent placements' \cite[p. 49]{C85}. A classical example is the one of Korteweg's fluid: the presence of menisci in capillarity phenomena implies curvature influence on the overall motion; it is (say) measured by second gradients \cite{K1} (see also \cite{DS} for pertinent generalizations). In solids length scale effects appear to be non-negligible for sufficiently small test specimens in various geometries and loading programs; in particular, when plasticity occurs in poly-crystalline materials, such effects are associated with grain size and accumulation of both randomly stored and geometrically necessary dislocations \cite{FMAH94}, \cite{FH}, \cite{Gud}.

These higher-order effects influence possible nucleation and growth of cracks because the corresponding hyperstresses enter the expression of Hamilton-Eshelby's configurational stress \cite{M17}. Here we refer to this kind of influence. We look at energy minimization and consider a variational description of crack nucleation in a body with second-gradient energy dependence.
We do not refer to higher order theories in abstract sense
(see \cite{DS} for a general setting, \cite{C85} for a physical explanations in terms of microstructural effects, \cite{M17} for a generalization of \cite{DS} to higher-order complex bodies), rather we consider a specific energy, in which we account for the gradient of surface variations and confinement effects due to the spatial variation of volumetric strain. Specifically, the energy we consider reads as 
\begin{equation}
\begin{aligned}
\cF(y,V;\mathcal{B}):=&\int_\cB \hat W\bigl( \nabla y(x),\nabla[\mathrm{cof}\nabla y(x)],\nabla[\det \nabla y(x)]\bigr)\,dx\\
&+\bar{a}\m_V(\mathcal{B})
+\int_{\cG_2(\cB)}a_{1}\Vert A\Vert^{\ol p}\,dV+a_{2}\Vert\pa V \Vert\,,
\end{aligned}
\end{equation}
with $\mathcal{B}$ a fit region in the three-dimensional real space, $\bar{a}$, $a_{1}$, and $a_{2}$ positive constants, $y:\mathcal{B}\longrightarrow \tilde{\mathbb{R}}^{3}$,  a special bounded variation map, a deformation that preserves the local orientation and is such that its jump set is contained in the support over $\mathcal{B}$ of a two-dimensional varifold $V$ with boundary $\partial V$ and generalized curvature tensor $A$. Such a support is a $2$-rectifiable subset of $\mathcal{B}$ with measure $\m_V(\mathcal{B})$. We identify such a set with a possible crack path, and the terms
$$\bar{a}\m_V(\mathcal{B})
+\int_{\cG_2(\cB)}a_{1}\Vert A\Vert^{\ol p}\,dV+a_{2}\Vert\pa V \Vert$$
represent a modification of the traditional Griffith energy \cite{Gri20}, which is just $\bar{a}\m_V(\mathcal{B})$ (i.e., it is proportional to the crack lateral surface area), so they have a configurational nature. The energy density $\hat{W}$ is assumed to be gradient polyconvex, according to the definition introduced in reference \cite{BKS}.

We presume that a minimality requirement for
$\cF(y,V;\mathcal{B})$ selects among cracked and free-of-crack configurations. To this aim we prove an existence theorem for such minima under Dirichlet-type boundary conditions; minimizing deformations satisfy also a condition allowing contact of distant body boundary pieces but avoiding self-penetration. This is the main result of this paper.

\section{Physical insight}

\subsection{Energy depending on $\nabla[\mathrm{cof}(\cdot)]$: a significant case}

The choice of allowing a dependence of the energy density $\hat{W}$ on $\nabla[\mathrm{cof}\nabla y]$
has physical ground: we consider an effect due to relative variations of neighboring surfaces.
Such a situation occurs, for example, in gradient
plasticity. We do not tackle directly its analysis here,
but in this section we explain just its geometric reasons.

In periodic and quasi-periodic crystals, plastic strain emerges from dislocation motion
through the lattice \cite{Phill01}, such phenomenon includes
meta-dislocations and their approximants in quasi-periodic lattices  \cite{FH11}, \cite{M19}. In polycristalline materials,
dislocations cluster at granular interstices obstructing or
favoring the re-organization of matter.
In amorphous materials other microstructural rearrangements determining plastic (irreversible) strain
occur. Examples are creation of voids, entanglement and disentanglement of polymers.

At macroscopic scale, the one of large wavelength approximation, a traditional way to account indirectly for the cooperative effects of irreversible microscopic mutations is to accept a multiplicative decomposition of the deformation gradient, commonly indicated by $F$, into so-called \textquotedblleft elastic", $F^{e}$, and \textquotedblleft plastic", $F^{p}$ factors \cite{Kr60}, \cite{L69}, namely $F=F^{e}F^{p}$, which we commonly name the \emph{ Kr\"{o}ner-Lee decomposition}. The plastic factor $F^{p}$ describes rearrangements of matter at a low scale, while $F^{e}$ accounts for macroscopic strain and rotation.

In such a view, the plastic factor $F^{p}$ indicates through its time-variation just how much (locally) the material goes far from thermodynamic equilibrium transiting from an energetic well to another, along a path in which the matter rearranges irreversibly.
In the presence of quasi-periodic atomic arrangements, as in quasicrystals, such a viewpoint requires extension to the phason field gradient \cite{LRT}, \cite{M06}.

Here, we restrict the view to cases in which just $F$ and its decomposition play a significant role: they include periodic crystals, polycrystals, even amorphous materials like cement or polymeric bodies, when we neglect at a first glance direct representation of the material microstructure in terms of appropriate morphological descriptors to be involved in Laundau-type descriptions coupled with strain.

With $\mathcal{B}$ a reference configuration for the body under scrutiny, at every its point $x$, the plastic factor $F^{p}$ maps the tangent space of $\mathcal{B}$ at $x$ into a linear space not otherwise specified, except assigning a metric $g_{\mathfrak{L}}$ to it---indicate such a space by $\mathfrak{L}_{F^{p}}$. Then, $F^{e}$ transforms such a space into the tangent space of the deformed configuration.

In general, the plastic factor $F^{p}$ allows us to describe an incompatible strain, so its curl does not vanish, i.e., $\mathrm{curl}F^{p}\neq 0$, unless we consider just a single crystal in which irrecoverable strain emerges from slips along crystalline planes. So, the condition
$\mathrm{curl}F^{p}\neq 0$, which may hold notwithstanding $\mathrm{curl}F= 0$, does not allow us to sew up one another the linear spaces $\mathfrak{L}_{F^{p}}$, varying $x\in\mathcal{B}$, so we cannot reconstruct an intermediate configuration, with the exception of a single crystal behaving as a deck of cards, parts of which can move along slip planes.
Of course, $\mathrm{curl}F^{p}= 0$ when $F^{e}$ reduces to the identity.

In modeling elastic-perfectly-plastic materials in large strain regime, we usually assume that the free energy density $\psi$ has a functional dependence on state variables of the type $\psi:=\tilde{\psi}(x,F,F^{p})$.
Further assumptions are listed below.
\begin{itemize}
	\item \emph{Plastic indifference}, which is invariance under changes in the reference shape, leaving unaltered the material structure (\emph{material isomorphims}); formally it reads as
	\begin{equation*}
	\tilde{\psi}(x,F,F^{p})=\tilde{\psi}(x,FG,F^{p}G),
	\end{equation*}
	for any orientation preserving unimodular second rank tensor $G$ mapping at every $x$ the tangent space $T_{x}\mathcal{B}$ of $\mathcal{B}$ at $x$ onto itself (the requirement $\mathrm{det}G=1$ ensures mass conservation along changes in reference configuration).
	\item \emph{Objectivity}: invariance with respect to the action of $SO(3)$ on the physical space; it formally reads
	\begin{equation*}
	\tilde{\psi}(x,F,F^{p})=\tilde{\psi}(x,QF,F^{p}),
	\end{equation*}
	for any $Q\in SO(3)$.
\end{itemize}

Plastic indifference implies
$\tilde{\psi}(x,F,F^{p})=\hat{\psi}(x,F^{e})$. Then, objectivity requires  $\hat{\psi}(x,F^{e})=\hat{\psi}(x,\tilde{C}^{e})$, with $\tilde{C}^{e}$ the right Cauchy-Green tensor $\tilde{C}^{e}=F^{e\mathrm{T}}F^{e}$, where $\tilde{C}^{e}=g_{\mathfrak{L}}^{-1}C^{e}$, with $C^{e}:=F^{e\ast}\tilde{g}F^{e}$ the pull-back in $\mathfrak{L}_{F^{p}}$ of the metric in $\mathcal{B}_{a}$ (the asterisk denotes formal adjoint, which coincides with the transpose when the metrics involved are flat). However, plastic indifference implies also $\tilde{\psi}(x,F,F^{p})=\hat{\psi}(x,F^{e}, \bar{g})$, where $ \bar{g}:=F^{p-\ast}gF^{p-1}$ is at each $x$ push-forward of the material metric $g$ onto the pertinent intermediate space $\mathfrak{L}_{F^{p}}$ through $F^{p}$. Since by the action of $G$ over the reference space $g$ becomes $G^{\ast}gG$, we get
$\bar{g}=F^{p-\ast}gF^{p-1}\overset{G}\longrightarrow(F^{p-\ast}G^{-\ast})G^{\ast}gG(G^{-1}F^{p-1})=\bar{g}$.

To account for second-neighborhood effects,
we commonly accept the free energy density to be
like $\hat{\psi}(x,F^{e}, D_{\alpha}F^{e})$ or
$\hat{\psi}(x,F^{e}, \bar{g}, D_{\alpha}F^{e})$, with $\alpha$ indicating that the derivative is computed with respect to coordinates over $\mathfrak{L}_{F^{p}(x)}$.

We claim here that
\emph{this choice---i.e., the presence of $D_{\alpha}F^{e}$ in the list of state variables---is related to the possibility of assigning energy to oriented area variations of neighboring staking faults when $\mathrm{det}F^{p}=1$}.

To prove the statement, first consider that the second-rank minors of $F^{p}$, collected in
$\mathrm{cof} F^{p}$, govern at each point $x$ the variations of oriented areas from the reference shape to the linear intermediate space associated with the same point. Neighboring staking faults
determine such variations in the microstructural arrangements collected in what we call plastic flows.
Since $\mathrm{det}F^{p}>0$, linear algebra tells us that $\mathrm{cof} F^{p}=(\mathrm{det}F^{p})F^{p-\ast}$, where $-\ast$ indicates adjoint of $F^{p-1}$.

Consequently, assigning energy to area variations due to first-neighbor staking faults, we may take a structure for the free energy as
\begin{equation*}
\psi:=\tilde{\psi}(x,F,F^{p},^{\urcorner}D\mathrm{cof} F^{p}),
\end{equation*}
where $D$ indicates the spatial derivative with respect to $x$, and the apex $^{\urcorner}$ indicates minor left adjoint operation of the first two indexes of a third order tensor (it corresponds to the minor left transposition when the metric is flat or the first two tensor components are both covariant or contravariant).
At least in the case
of volume-preserving crystal slips over planes ($\mathrm{det}F^{p}=1$), we have $\mathrm{cof} F^{p}=F^{p-\ast}$, whence we can write in operational form $D\mathrm{cof} F^{p}=F^{p-\ast}\otimes D$ so that $^{\urcorner}D\mathrm{cof} F^{p}=F^{p-1}\otimes D$. Under the action of $G$, describing a change in the reference shape, as above, we have
$F^{p-\ast}\otimes D\overset{G}\longrightarrow ((GF^{p-1})^{\ast}\otimes D)G$. Consequently, for volume-preserving plastic flows, the requirement of \emph{plastic invariance}
reads
\begin{equation*}
\tilde{\psi}(x,F,F^{p},F^{p-1}\otimes D)=\tilde{\psi}(x,FG,F^{p}G,((G^{-1}F^{p-1}))\otimes DG)
\end{equation*}
for any choice of $G$ with $\mathrm{det}G=1$.
The condition implies
\begin{equation*}
\begin{aligned}
\tilde{\psi}(x,F,F^{p},F^{p-\ast}\otimes D)&=\tilde{\psi}(x,FF^{p-1}, \bar{g},((FF^{p-1})\otimes D)F^{p-1})\\
&=\tilde{\psi}(x,FF^{p-1}, \bar{g},(DF^{e})F^{p-1})=\hat{\psi}(x,F^{e}, \bar{g}, D_{\alpha}F^{e}),
\end{aligned}
\end{equation*}
which concludes the proof.

Alternatively, if we choose
\begin{equation*}
\psi:=\tilde{\psi}(x,F,F^{p},D\mathrm{cof} F^{p}),
\end{equation*}
with the same argument as above we get
\begin{equation*}
\tilde{\psi}(x,F,F^{p},D\mathrm{cof} F^{p})=\hat{\psi}(x,F^{e}, \bar{g}, D_{\alpha}F^{e\ast}).
\end{equation*}

In our analysis here the density $\hat{W}$ is less intricate than $\tilde{\psi}(x,F,F^{p},D\mathrm{cof} F^{p})$, however, the analysis of its structure indicates
a fruitful path for dealing with more complex situations.

Finally, from now on we just assume flat metrics so that we write $\nabla$ instead of $D$, which appears to indicate the weak derivative of special bounded variation functions, a measure indeed. Also, we refer just to $F$ and do not consider the plasticity setting depicted by the multiplicative decomposition. Despite this, our choice of considering the gradient of $\mathrm{cof}F$ among the entries of $\tilde{W}$ is intended as an indicator of relative surface variation effects. Also, as already mentioned, the dependence of $\tilde{W}$ on $\nabla\mathrm{det}F$ is a way of accounting for confinement effects due to non-homogeneous volume variations (see \cite{BMM} for a pertinent analysis in small strain regime).

Explanations a part are necessary for justifying the representation of cracks in terms of varifold, which are special vector-valued measures.

\subsection{Cracks in terms of varifolds}

Take a reference configuration $\mathcal{B}$ of a body that can be cracked, and a set
of its infinitely many copies differing one another just by a possible crack path, each a $\mathcal{H}^{2}$-rectifiable set. In this reference picture, each crack path can be considered fictitious, i.e., the projection over $\mathcal{B}$ of the real crack occurring in the deformed shape; in other words, it can be considered as a shadow over a wall. Assigned boundary conditions, a question can be whether a crack may occur so that the deformed configuration is in one-to-one correspondence with at least one of the infinitely many reference configurations just depicted.

We may imagine of giving an answer by taking an expression of the energy including both bulk and crack components, asking its minimality as a criterion of selecting among configurations with or without cracks.
This is what has been proposed in reference \cite{FM98} taking Griffith's energy \cite{Gri20} as the appropriate functional. This minimality criterion is also a first step
to approximate a cracking process \cite{FM98}. To this aim we may select a finite partition of the time interval presuming to go from the state at instant $k$ to the one at $k+1$ by minimizing the energy. In principle, the subsequent step should be computing the limit as partition interval goes to zero. This path rests on
De Giorgi's notion of minimizing movements \cite{DeG93}.

In the minimum problem, deformation and crack paths
are the unknowns. A non-trivial difficulty emerges: in three dimensions we cannot control minimizing sequences of surfaces. A way of overcoming the difficulty is to consider as unknown just the deformation taken, however, in the space of those special functions with bounded variations, which are orientation preserving. We give their formal definition in the next section. Here, we just need to know that they admit a jump set with non-zero $\mathcal{H}^{2}$ measure. Once found minima of such a type, we identify
the crack path with the deformation jump set \cite{DMT002}. Although such a view is source of
nontrivial analytical problems and pertinent results \cite{DMT002}, it does not cover cases in which portions of the crack margins are in contact but material bonds across them are broken. To account for these phenomena, we need to recover the original proposal
in reference \cite{FM98}, taking once again separately deformations and crack paths. However, the problem of controlling minimizing sequences of surfaces or more irregular crack paths reappears. A way of overcoming it is to select minimizing sequences with bounded curvature because this restriction would avoid surface blow up. This is the idea leading to the representation of cracks in terms of varifolds.

Take $x\in\mathcal{B}$, the question to be considered
is not only whether $x$ belongs to a potential crack path or not but also, in the affirmative case, what is the tangent (even in approximate sense) of the crack there, among all planes $\Pi$ crossing $x$.
Each pair $(x,\Pi)$ can be viewed as a typical point of a fiber bundle $\mathcal{G}_{k}(\mathcal{B})$, $k=1,2$,
with natural projector $\pi:\mathcal{G}_{k}(\mathcal{B})\longrightarrow\mathcal{B}$ and typical fiber $\pi^{-1}(x)=\mathcal{G}_{k,3}$ the Grassmanian of 2D-planes or straight lines associated with $\mathcal{B}$. A $k$-varifold over $\mathcal{B}$ is a non-negative Radon measure $V$ over the bundle $\mathcal{G}_{k}(\mathcal{B})$ \cite{Alm65}, \cite{All72}, \cite{All75}, \cite{Mant96}.
For the sake of simplicity, here we consider just $\mathcal{G}_{2}(\mathcal{B})$, avoiding one-dimensional crack in a $3D$-body. The generalization to include $1D$ cracks is straightforward. Itself, $V$ has a projection $\pi_\#V$ over $\mathcal{B}$, which is a Radon measure over
$\mathcal{B}$, indicated for short by $\mu_{V}$.
Specifically, we may consider varifolds supported by 
$\mathcal{H}^{2}$-rectifiable subsets of $\mathcal{B}$,
i.e., by potential crack paths. We look at those varifolds admitting a certain notion of generalized curvature (its formal definition is in the next section) and parametrize through them the set of infinitely many reference configurations described above. Rather than sequences of cracks, we consider sequences of varifolds. The choice allows us to avoid the problem of controlling sequences of surfaces but forces us to include the varifold and its curvature in the energy, leading (at least in the simplest case) to a variant of Griffith's energy augmented by
$$\int_{\cG_2(\cB)}a_{1}\Vert A\Vert^{\ol p}\,dV+a_{2}\Vert\pa V \Vert$$
with respect to the traditional term just proportional to the surface crack area, namely $\bar{a}\m_V(\mathcal{B})$. Such a view point has been
introduced first in references \cite{GMMM10} and \cite{M10} (see also \cite{GMM10}).

The discussion in this section justifies a choice of
a energy functional like $\mathcal{F}(y,V;\mathcal{B})$, indicated above, which we analyze in the next sections.

\section{Background analytical material}
\subsection{Some notation}
For $G:\gr^n\to\gr^N$ a linear map, where $n\geq 2$ and $N\geq 1$, we indicate also by
$ G=(G^j_i)$, ${j=1,\ldots,N}$, $i=1,\ldots n $,
the $(N\tim n)$-matrix representing $G$ once we have assigned bases $(e_1,\ldots,e_n)$ and $(\e_1,\ldots,\e_N)$ in $\gr^n$ and $\gr^N$, respectively.

For any ordered multi-indices $\a$ in $\{1,\ldots,n\}$ and
$\b$ in $\{1,\ldots,N\}$ with length $|\a|=n-k$ and $|\b|=k$, we denote by $G^{\b}_{\ol\a}$
the $(k\tim k)$-submatrix of $G$ with rows
$\b=(\b_1,\ldots,\b_k)$ and columns
$\ol\a=(\ol\a_1,\ldots,\ol\a_k)$, where $\ol\a$ is the element which complements
$\a$ in $\{1,\ldots,n\}$, and $0\leq k\leq\ol n:=\min\{n,N\}$.
We also denote by
$$M^\b_{\ol\a}(G):= \det G^{\b}_{\ol\a}$$
the determinant of $G^{\b}_{\ol\a}$\,, and set $M^0_{0}(G):=1$. Also, the Jacobian $|M(G)|$ of the graph map $x\mapsto (Id\join G)(x):=(x,G(x))$  from $\gr^n$ into $\gr^n\tim\gr^N$ satisfies
\begin{equation}\label{MG}
|M(G)|^2:=\sum_{|\a|+|\b|=n}M^\b_{\ol\a}(G)^2. \end{equation}
\subsection{Currents carried by approximately differentiable maps}
Let $\O\sb\gr^n$ be a bounded domain, with $\cL^n$ the pertinent Lebesgue measure.
For $u:\O\to \gr^N$ an $\cL^n$-a.e. approximately differentiable map, we denote by $\nabla u(x)\in\gr^{N\tim n}$ its
approximate gradient at a.e. $x\in\O$. The map $u$ has a \emph{Lusin
representative} on the subset $\wid \O$ of Lebesgue points pertaining to both
$u$ and $\nabla u$. Also, we have $\cL^n(\O\sm\wid\O)=0$.

In this setting, we write $u\in\cA^1(\O,\gr^N)$ if
\begin{itemize}
	\item $\nabla u\in L^1(\O,\bbM^{3\times 2})$ and
	
	\item $M^\b_{\ol\a}(\nabla u)\in L^1(\O)$ for any ordered multi-indices $\a$ and
	$\b$ with $|\a|+|\b|=n$.
\end{itemize}
\par The {\em graph} $\cG_ u$ of a map $u\in\cA^1(\O,\gr^N) $ is defined by
\begin{equation*}
\cG_ u:=\Set{ (x,y) \in \O\times \gr^N \mid x\in \wid\O\,,\ y=\widetilde{u}(x)},
\end{equation*}
where $\wid{u}(x) $ is the Lebesgue value of $u$. It turns out that $\cG_u$ is a countably $n$-rectifiable set of $\O\tim\gr^N$, with $\cH^n(\cG_u)<\infty$. The
approximate tangent $n$-plane at $(x,\wid u(x))$ is generated by
the
vectors $\Gt_i(x)=(e_i,\pa_i u(x))\in\gr^{n+N}$, for $i=1,\ldots,n$, where the partial derivatives are the column vectors of the gradient matrix $\nabla u$, and we take
$\nabla u(x)$ as the Lebesgue value of $\nabla u$ at $x\in\wid\O$.

The unit $n$-vector
$$ \xi(x):=\frac{\Gt_1(x)\wedge \Gt_2(x)\wedge\cdots\wedge \Gt_n(x)}{|\Gt_1(x)\wedge \Gt_2(x)\wedge\cdots\wedge \Gt_n(x)|} $$
provides an orientation to the graph $\cG_u$.
\par For $\cD^k(\O\tim\gr^N)$ \emph{the vector space of compactly supported smooth $k$-forms in} $\O\tim\gr^N$,
and $\cH^k$ the $k$-dimensional Hausdorff measure, one defines the current
$G_u$ carried by the graph of $u$ through the integration of $n$-form on $\cG_u$, namely
$$ \langle G_u,\o \rangle:=\int_{\cG_u}\langle \o,\xi\rangle\,d\cH^n\,,\qquad \o\in \cD^n(\O\tim\gr^N),$$
where $\langle ,\rangle$ indicates the duality pairing.
Consequently, by definition $G_{u}$ is an element
of the (strong) dual of the space $\cD^n(\O\tim\gr^N)$.
Write $\cD_n(\O\tim\gr^N)$ for such a dual space.
Any element of it is called a \emph{current}.

By writing $U$ for a open set in $\mathbb{R}^{n+N}$,
we define \emph{mass} of $T\in\cD_k(U)$ the number
$$\GM(T):=\sup\{\langle T,\o\rangle\mid \o\in\cD^k(U)\,,\,\,\Vert\o\Vert\leq 1\} $$
and call a {\em boundary} of $T$ the $(k-1)$-current $\partial T$ defined by
$$\langle \pa T,\y\rangle := \langle T,d\y \rangle, \qquad \y\in \cD^{k-1}(U),$$
where $d\y$ is the differential of $\y$.

A {\em weak convergence} $T_h\rightharpoonup T$ in the sense of currents in $\cD_k(U)$ is defined through the formula
$$ \lim_{h\to\ii}\langle T_{h},\o \rangle = \langle T,\o \rangle\qquad\fa\,\o\in \cD^k(U)\,.$$
If $T_h\rightharpoonup T$, by lower semicontinuity we also have
$$\GM(T)\leq\liminf_{h\to\ii}\GM(T_h)\,.$$

With these notions in mind, we say that $G_u$ is an \emph{integer multiplicity} (in short i.m.) \emph{rectifiable current} in $\cR_n(\O\tim\gr^N)$, with finite mass
$\GM(G_u)$ equal to the area $\cH^n(\cG_u)$ of the $u$-graph.
According to \eqref{MG}, since the Jacobian $|M(\nabla u)|$ of the graph map $x\mapsto (Id\join u)(x)=(x,u(x))$ is equal to
$|\Gt_1(x)\wedge \Gt_2(x)\wedge\cdots\wedge \Gt_n(x)|$, by the area formula
$$ \langle G_u,\o \rangle=
\int_\O (Id\join u)^\#\o=\int_\O
\langle\o(x,u(x)),M(\nabla u(x))\rangle\, dx
$$
for any $\o\in \cD^n(\O\tim\gr^N)$, so that
$$ \GM(G_u)=\cH^n(\cG_u)=\int_\O|M(\nabla u)|\,dx < \infty\ . $$

\par If $u$ is of class $C^2$, the Stokes theorem implies
$$ \langle \pa G_u,\y\rangle=\langle G_u,d\y\rangle=\int_{\cG_u}d\y=\int_{\partial \cG_u}\y=0 $$
for every $\y\in \cD^{n-1}(\O\times \gr^N)$, i.e., the null-boundary condition
\begin{equation}\label{bdryzero} (\pa G_u)\pri\O\tim\gr^N=0\,. \end{equation}
Such a property \eqref{bdryzero} holds true also
for Sobolev maps $u\in W^{1,\ol n}(\O,\gr^N)$, by approximation. However, in
general, the boundary $\pa G_u$ does not vanish and may not have finite mass in $\O\tim\gr^N$.
On the other hand, if $\pa G_u$ has finite mass, the boundary rectifiability theorem states that $\pa G_u$ is an i.m. rectifiable current in $\cR_{n-1}(\O\tim\gr^N)$.
An extended treatment of currents is in the two-volume treatise \cite{GMS98}.
\subsection{Weak convergence of minors}
Let $\{u_h\}$ be a
sequence in $\cA^1(\O,\gr^N)$.
\par Take $N=1$, i.e., consider real-valued maps $u$. Suppose also to have in hands sequences $\{u_h\}$ and $\{\nabla u_h\}$ such that $u_h\to u$ strongly in
$L^1(\O)$ and $\nabla u_h\rightharpoonup v$ weakly in
$L^1(\O,\gr^n)$, where $u\in L^1(\O)$ is an
a.e. approximately differentiable map and $v\in
L^1(\O,\gr^n)$. In general, \emph{we cannot conclude} that $v=\nabla
u$ a.e. in $\O$. The question has a positive answer provided that $\{u_h\}$ is a sequence in $W^{1,1}(\O)$.
Notice that, when $N=1$, the membership of a function $u\in\cA^1(\O,\gr)$ to the Sobolev space $W^{1,1}(\O)$ is equivalent to the null-boundary condition
\eqref{bdryzero}.
\par When $N\geq 2$, assume that $u_h\to u$
strongly in $L^1(\O,\gr^N)$, with $u$ some a.e. approximately differentiable $ L^1(\O,\gr^N)$ map. Presume also that $M^\b_{\overline\a}(\nabla u_h)\rightharpoonup
v^\b_{\ol\a}$  weakly in $L^1(\O)$, with $v^\b_{\ol\a}\in L^1(\O)$, for every multi-indices $\a$ and
$\b$, with $\vert\a\vert+\vert\b\vert=n$.
A \emph{sufficient condition} ensuring that
$v^\b_{\ol\a}=M^\b_{\ol\a}(\nabla u)$ a.e. is again the validity of equation \eqref{bdryzero} for each $u_h$.
\par We can weaken such a condition by requiring a mass
control on $G_{u_h}$ boundaries  of the type
\begin{equation}\label{MbdGuk} \sup_h\GM((\pa G_{u_h})\pri\O\tim\gr^N)<\ii\,, \end{equation}
as stated by Federer-Fleming's closure theorem \cite{FF}, which refers to sequences of graphs $G_{u_h}$ which have
equi-bounded masses, $\sup_h\GM(G_{u_h})<\ii$ and satisfy the condition \eqref{MbdGuk}
\cite[Vol.~I, Sec.~3.3.2]{GMS98}.
\begin{theorem}\label{TclosG}{\bf (Closure theorem).}  Let
	$\{u_h\}$ be a sequence in $\cA^1(\O,\gr^N)$ such that
	$u_h\to u$ strongly in $L^1(\O,\gr^N)$ to an a.e. approximately differentiable map $u\in
	L^1(\O,\gr^N)$. For any multi-indices $\a$ and $\b$ with $\vert\a\vert+\vert\b\vert=n$, assume
	$$M^\b_{\ol\a}(\nabla u_h)\rightharpoonup v^\b_{\ol\a}\qquad{\text{weakly in }}L^1(\O), $$
	with $v^\b_{\ol\a}\in L^1(\O)$.
	If the bound $\eqref{MbdGuk}$ holds, the inclusion $u\in\cA^1(\O,\gr^N)$ holds
	and, for every $\a$ and $\b$,
	\begin{equation}\label{vabe} v^\b_{\ol\a}(x)=M^\b_{\ol\a}(\nabla
	u(x))\qquad \cL^n{\mx{-a.e in }}\O\,. \end{equation}
	Moreover, we find $G_{u_h}\rightharpoonup G_u$ weakly in $\cD_n(\O\tim\gr^N)$, and also
	$$\begin{array}{rl}\GM(G_u) \leq &\liminf\limits_{h\to\ii}\GM(G_{u_h})<\ii \\
	\GM((\pa G_{u})\pri\O\tim\gr^N)\leq &\liminf\limits_{h\to \ii}\GM((\pa G_{u_h})\pri\O\tim\gr^N)<\ii\,. \end{array}$$
\end{theorem}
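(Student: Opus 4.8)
The statement is the Federer--Fleming closure theorem \cite{FF} transcribed into the language of graph currents, so the plan is to verify its hypotheses and then to identify the weak limit of the $G_{u_h}$ with $G_u$. First I would record the required equi-boundedness of masses. A sequence converging weakly in $L^1$ is bounded there, so $\sup_h\Vert M^\b_{\ol\a}(\nabla u_h)\Vert_{L^1(\O)}<\ii$ for all admissible $\a,\b$. Since $|M(G)|\le\sum_{|\a|+|\b|=n}|M^\b_{\ol\a}(G)|$ by \eqref{MG} (the term $M^0_0\equiv 1$ contributing only $\cL^n(\O)$), the area-formula identity $\GM(G_{u_h})=\int_\O|M(\nabla u_h)|\,dx$ yields $\sup_h\GM(G_{u_h})<\ii$. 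Combined with the hypothesis \eqref{MbdGuk} on the boundary masses, the currents $G_{u_h}$ meet the equi-boundedness requirements of the closure theorem; hence, along a subsequence (not relabeled), $G_{u_h}\rightharpoonup T$ in $\cD_n(\O\tim\gr^N)$, where $T\in\cR_n(\O\tim\gr^N)$ is i.m. rectifiable, $(\pa T)\pri\O\tim\gr^N$ is i.m. rectifiable in $\cR_{n-1}(\O\tim\gr^N)$, and the mass lower-semicontinuity inequalities hold.

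The decisive step is to prove $T=G_u$. Writing the components of a test form $\o\in\cD^n(\O\tim\gr^N)$ dual to the minors as $\phi^\b_{\ol\a}\in C^\infty_c(\O\tim\gr^N)$, the area formula gives $\langle G_{u_h},\o\rangle=\sum_{|\a|+|\b|=n}\int_\O\phi^\b_{\ol\a}(x,u_h(x))\,M^\b_{\ol\a}(\nabla u_h(x))\,dx$. Passing to a further subsequence with $u_h\to u$ a.e., the composed coefficients $\phi^\b_{\ol\a}(\cdot,u_h(\cdot))$ converge a.e. and stay uniformly bounded, while $M^\b_{\ol\a}(\nabla u_h)\rightharpoonup v^\b_{\ol\a}$ weakly in $L^1$; by Dunford--Pettis the minors are equi-integrable, so a weak--strong product passage to the limit (via Egorov) gives $\langle T,\o\rangle=\sum_{|\a|+|\b|=n}\int_\O\phi^\b_{\ol\a}(x,u(x))\,v^\b_{\ol\a}(x)\,dx$. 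Testing first with purely horizontal forms ($|\b|=0$, where $M^0_0\equiv 1$) shows $\pi_\# T=\qu{\O}$ and that $\spt T$ lies over the graph of $u$; by the structure theory of i.m. rectifiable currents projecting onto $\O$ with degree one, $T$ is the graph current of an a.e. approximately differentiable map, necessarily equal to $u$ by the a.e. convergence. This gives $u\in\cA^1(\O,\gr^N)$ and $T=G_u$, and comparing the two expressions for $\langle T,\o\rangle$ over the remaining components yields \eqref{vabe}, i.e. $v^\b_{\ol\a}=M^\b_{\ol\a}(\nabla u)$ a.e.

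Finally, since the limit $G_u$ is independent of the subsequence — every subsequence of $\{u_h\}$ still satisfies all the hypotheses with the same $v^\b_{\ol\a}$ — a standard subsequence (Urysohn) argument upgrades subsequential convergence to $G_{u_h}\rightharpoonup G_u$ for the whole sequence, and the two mass inequalities follow from the lower semicontinuity of mass under weak convergence recalled above. The genuine obstacle is the identification in the previous paragraph: weak $L^1$ convergence of the minors does \emph{not} by itself force $v^\b_{\ol\a}=M^\b_{\ol\a}(\nabla u)$ — this is precisely the failure of weak continuity of minors discussed before the statement. What rescues the argument is the uniform boundary bound \eqref{MbdGuk}: it is exactly the hypothesis making Federer--Fleming applicable and preventing the limit from developing spurious interior boundary or vertical parts, so that $T$ must be a genuine graph. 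Controlling the weak--strong product through equi-integrability is the other technical point, but it is routine once Dunford--Pettis is invoked. (A fully detailed development is in \cite[Vol.~I, Sec.~3.3.2]{GMS98}.)
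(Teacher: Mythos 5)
Your proposal is correct and takes essentially the same route as the paper, which states this closure theorem as background material and proves it only by citation to Federer--Fleming \cite{FF} and to \cite[Vol.~I, Sec.~3.3.2]{GMS98}: your plan (mass equi-boundedness from weak $L^1$ convergence of the minors plus \eqref{MbdGuk}, Federer--Fleming compactness and closure, identification of the limit current with $G_u$ through the Egorov/Dunford--Pettis weak--strong product argument and the graph structure theorem, then the subsequence-uniqueness upgrade) is precisely the standard argument in the cited reference. Nothing further is needed.
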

\subsection{Special functions of bounded variation}
A summable function $u\in L^1(\O)$ is said to be \emph{of bounded variation} if the distributional derivative $Du$ is a finite measure in $\O$.
Such a function $u$ is approximately differentiable $\cL^n$-a.e. in $\O$. Its approximate gradient $\nabla u$ agrees with the
Radon-Nikodym derivative density of $Du$ with respect to $\cL^n$.
Then, the decomposition $Du=\nabla u\,\cL^n+D^su$ holds true, where the component $D^su$ is singular with respect to $\cL^n$.
Also, the \emph{jump set} $S(u)$ of $u$ is a countably $(n-1)$-rectifiable subset of $\O$ that agrees $\cH^{n-1}$-essentially with the complement of $u$ Lebesgue's set.
If, in addition, the singular component $D^su$ is concentrated on the jump set $S(u)$, we say that $u$ is a {\em special function of bounded variation}, and write in short $u\in SBV(\O)$.
\par A vector valued function $u:\O\to\gr^N$ belongs to the class $SBV(\O,\gr^N)$ if all its components $u^j$ are in $SBV(\O)$.
In this case, $Du=\nabla u\,\cL^n+D^su$, where the approximate gradient $\nabla u$ belongs to $L^1(\O,\gr^{N\tim n} )$, and the jump set $S(u)$ is defined component-wise as 
in the scalar case, so that
$D^su=(u^+-u^-)\otimes\n\cH^{n-1}\pri S(u)$, where $\n$ is an unit normal to $S(u)$ and $u^\pm$ are the one-sided limits at $x\in S(u)$. Therefore, for each Borel set $B\subset\Omega$ we get
$$ |Du|(B)=\int_B|\nabla u|\,dx+\int_{ B\cap S(u)} |u^+ -u^- | \, d\cH^{n-1}\,.  $$
\par
Compactness and lower semicontinuity
results hold in $SBV$. The treatise \cite{AFP00} offers an accurate analysis of $SBV$ landscape.
Here, we just recall that the
compactness theorem in \cite{A2} relies on a
generalization of the following characterization of
$SBV$ functions with $\cH^{n-1}$-rectifiable jump sets. 

According to reference 
\cite{ABG}, we denote by ${\cT}(\O\tim\gr)$ the class of
$C^1$-functions \,$\vf(x,y)$\, such that \,$\vert\vf\vert+\vert
D\vf\vert$\, is bounded and the support of $\vf$\, is contained in
$K\tim\gr$ for some compact set $K\sb\O$.
\begin{proposition}\label{PABG} Take $u\in BV(\O)$. Then, $u\in SBV(\O)$, with
	$\cH^{n-1}(S(u))<\ii$, if and only if for every $i=1,\ldots,n$
	there exists a Radon measure $\m_i$ in $\O\tim\gr$ such
	that
	$$
	\int_\O\Bigl(\frac{\pa\vf}{\pa
		x_i}(x,u(x))+ \frac{\pa\vf}{\pa
		y}(x,u(x))\,\pa_i{u}(x)\Bigr)\,dx=
	\int_{\O\tim\gr}\vf\,d\m_i $$
	for any \,$\vf\in{\cT}(\O\tim\gr)$. In this case, we have
	$$\m_i= -(Id\join u^+)_\#(\n_i\cH^{n-1}\pri
	S(u))+(Id\join u^-)_\#(\n_i\cH^{n-1}\pri S(u))\,. $$
\end{proposition}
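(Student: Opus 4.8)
The plan is to base the argument on the chain rule for compositions in $BV$, applied to $w(x):=\vf(x,u(x))$, combined with the vanishing of the total derivative of a compactly supported $BV$ function. Since $u\in BV(\O)$ and $\vf\in\cT(\O\tim\gr)$ is $C^1$ with $|\vf|+|D\vf|$ bounded, $w\in BV(\O)$, and for each $i$ the Vol'pert--Ambrosio--Dal Maso chain rule gives
\begin{equation*}
D_iw=\Bigl(\frac{\pa\vf}{\pa x_i}(x,u)+\frac{\pa\vf}{\pa y}(x,u)\,\nabla_iu\Bigr)\cL^n+\frac{\pa\vf}{\pa y}(x,\wid u)\,D^c_iu+\bigl(\vf(x,u^+)-\vf(x,u^-)\bigr)\,\n_i\,\cH^{n-1}\pri S(u),
\end{equation*}
where $D^cu:=D^su\pri(\O\sm S(u))$ is the Cantor part and $\wid u$ the precise representative. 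Because $\vf$ is supported in $K\tim\gr$ with $K$ compact in $\O$, the map $w$ has compact support, so $\int_\O dD_iw=0$ (test $D_iw$ against a cut-off equal to $1$ on $K$). Rearranging the three contributions produces the master identity
\begin{equation*}
\int_\O\Bigl(\frac{\pa\vf}{\pa x_i}(x,u)+\frac{\pa\vf}{\pa y}(x,u)\,\pa_iu\Bigr)dx=-\int_\O\frac{\pa\vf}{\pa y}(x,\wid u)\,dD^c_iu-\int_{S(u)}\bigl(\vf(x,u^+)-\vf(x,u^-)\bigr)\n_i\,d\cH^{n-1}.
\end{equation*}

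For the direct implication I would assume $u\in SBV(\O)$ with $\cH^{n-1}(S(u))<\ii$. Then $D^cu=0$, the Cantor term disappears, and the right-hand side of the master identity is exactly $\int_{\O\tim\gr}\vf\,d\m_i$ for the signed measure $\m_i$ displayed in the statement; finiteness of $\cH^{n-1}(S(u))$ ensures $\m_i$ is a finite Radon measure. This yields both the existence of the $\m_i$ and their explicit expression in one stroke.

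For the converse the left-hand side is assumed to equal $\int\vf\,d\m_i$ for Radon measures $\m_i$, and I would read the master identity in two stages. First I rule out the Cantor part: the functional $\vf\mapsto\int_\O\pa_y\vf(x,\wid u)\,dD^c_iu$ truly depends on $\pa_y\vf$, hence is a distribution of order one, and can agree with an order-zero object (a measure) only if $D^c_iu=0$; testing with $\vf(x,y)=\eta(x)\z(y)$ for which $\sup|\vf|$ stays bounded while $\z'$ is large, and localizing by the fact that $D^cu$ charges no $\cH^{n-1}$-finite set (in particular none of $S(u)$), shows that a nonzero $D^cu$ would break the bound $|\langle\m_i,\vf\rangle|\le\Vert\m_i\Vert\,\sup|\vf|$. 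Thus $u\in SBV(\O)$ and the identity collapses to $\int\vf\,d\m_i=-\int_{S(u)}(\vf(x,u^+)-\vf(x,u^-))\n_i\,d\cH^{n-1}$. Second I extract finiteness of the jump set: choosing, on a prescribed portion of $S(u)$, an admissible $\vf$ with $\sup|\vf|\le1$ that separates the two traces so that $\vf(x,u^-)-\vf(x,u^+)$ equals the sign of $\n_i(x)$, one gets $\int_{S(u)}|\n_i|\,d\cH^{n-1}\le\Vert\m_i\Vert$, and summing over $i$ with $\sum_i|\n_i|\ge(\sum_i\n_i^2)^{1/2}=1$ gives $\cH^{n-1}(S(u))\le\sum_i\Vert\m_i\Vert<\ii$.

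It is worth noting the current-theoretic reading that matches the machinery already set up: pulling the $(n-1)$-form $\vf\,dx_1\wdd\cdots\wdd dx_{i-1}\wdd dx_{i+1}\wdd\cdots\wdd dx_n$ back through the graph map identifies the left-hand side with $\pm\langle\pa G_u,\vf\,dx_1\wdd\cdots\wdd dx_{i-1}\wdd dx_{i+1}\wdd\cdots\wdd dx_n\rangle$, so the hypothesis says precisely that $\pa G_u\pri(\O\tim\gr)$ has finite mass, after which the boundary rectifiability theorem realizes $\pa G_u$ as an i.m. rectifiable current and gives an alternative derivation of both conclusions. The main obstacle is the second stage of the converse: constructing genuinely admissible $C^1$ test functions that disentangle the upper and lower traces $u^\pm$ uniformly enough over the entire jump set to turn mere representability by $\m_i$ into the quantitative bound $\cH^{n-1}(S(u))<\ii$. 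This is exactly the point where the measurable and rectifiable structure of $S(u)$ and of the one-sided traces must be exploited with care.
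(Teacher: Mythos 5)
First, a point of reference: the paper itself offers no proof of Proposition~\ref{PABG} --- it is quoted verbatim from the cited work of Ambrosio, Braides and Garroni \cite{ABG}, so your attempt can only be measured against that proof. Your skeleton is the right one and matches the standard route: the chain rule for $x\mapsto\vf(x,u(x))$ plus the vanishing of the total derivative of a compactly supported $BV$ function gives your master identity, and your direct implication (with the identification of $\m_i$ and the bound $\Vert\m_i\Vert\le 2\int_{S(u)}|\n_i|\,d\cH^{n-1}$) is complete and correct.

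The genuine gap is in stage 1 of the converse. The heuristic ``an order-one distribution can agree with an order-zero object only if $D^c_iu=0$'' is not an argument, and the specific tests you propose --- $\vf=\eta\otimes\z$ with $\sup|\z|$ small and $\z'$ large --- fail on the model case. Take $n=1$ and $u$ the Cantor--Vitali function, so that $Du=D^cu$ is the Cantor measure and $\wid u_\#(D^cu)=\cL^1\pri(0,1)$. With $\z_\ve(y)=\ve\sin(y/\ve)$ the Cantor term in your master identity becomes $\int\eta(x)\cos(\wid u(x)/\ve)\,dD^cu=\int_0^1\eta(v(t))\cos(t/\ve)\,dt$, where $v$ is the (a.e.\ defined) inverse of $u$, and this tends to $0$ by Riemann--Lebesgue: oscillation in $y$ does not detect the Cantor part at all, and no contradiction with $|\langle\m_i,\vf\rangle|\le\Vert\m_i\Vert\sup|\vf|$ can be extracted this way. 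The obstruction produced by $D^cu$ is geometric rather than oscillatory: in this example $\langle\pa G_u,\vf\rangle=\sum_k\bigl(\vf(b_k,c_k)-\vf(a_k,c_k)\bigr)$, summed over the countably many gap intervals $(a_k,b_k)$ on which $u\equiv c_k$, and one defeats any measure bound only with $C^1$ bumps of alternating sign placed at finitely many of the (distinct) endpoint pairs --- i.e.\ with test functions adapted to the structure of the graph boundary. This is precisely why the proof in \cite{ABG} runs through the current-theoretic reading you relegate to a closing aside: the hypothesis says $(\pa G_u)\pri\O\tim\gr$ has finite mass, and one then exploits the structure of boundaries of graphs of $BV$ maps (or slicing down to $n=1$, where the computation above is explicit) to conclude simultaneously that $D^cu=0$ and that the jump part has the stated mass. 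Note also that your two stages cannot be run in the order you propose: the trace-separating bumps of stage 2 have $\pa_y\vf$ of size comparable to the reciprocal of the bump width, so until $D^cu=0$ is established the Cantor pairing pollutes exactly the estimate $\int_{S(u)}|\n_i|\,d\cH^{n-1}\le\Vert\m_i\Vert$ you want; the mutual singularity of the carriers (the graphs of $u^\pm$ over $S(u)$ versus the graph of $\wid u$ over the Cantor carrier) must be invoked quantitatively, which is again information about $\pa G_u$ rather than about the orders of distributions.
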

\par As a consequence, we infer that if a sequence $\{u_h\}\in\cA^1(\O,\gr^N)$ satisfies
$$\sup_h\Bigl( \Vert u_h\Vert_\ii+\int_\O|M(\nabla u_h)|^p\,dx\Bigl)<\ii \,,\qquad p>1$$
and the boundary mass bound \eqref{MbdGuk}, the inclusion $\{u_h\}\in SBV(\O,\gr^N)$ and the $SBV$ compactness theorem hold. In fact, by
Proposition~\ref{PABG} we get
$$\cH^{n-1}\pri S(u_h)\leq \pi_\#|\pa G_{u_h}|(B)\qquad\fa\,h $$
where $\pi:\O\tim\gr^N\to\O$ is the projection onto the first $n$ coordinates, and $|\cdot|$ the total variation,
so that $\pi_\#|\pa G_u|(B)=|\pa G_u|(B\tim\gr^N)$ for each Borel set $B\sb\O$.
\subsection{Generalized functions of bounded variation}
When the bound $\sup_h\Vert u_h\Vert_\ii<\ii$ fails, the SBV compactness theorem cannot be applied.
This happens, e.g., if $u_h=\nabla y_h$ for some sequence $\{y_h\}\sb W^{1,p}(\O)$. When such sequences play a role in the problems analyzed, we find it convenient to call upon
{\em generalized special functions of bounded variation}, the class of which is commonly denoted by $GSBV$.

To define them, first write $SBV_{\textrm{loc}}(\O)$ for functions $v:\O\to\gr$ such that
$v_{\vert K}\in SBV(K)$ for every compact set $K\subset\O$.
\begin{definition} A function $u:\O\to\gr^N$ belongs to the class $GSBV(\O,\gr^N)$ if $\phi\circ u\in SBV_{\textrm{loc}}(\O)$ for every $\phi\in C^1(\gr^N)$ with the support of $\nabla\phi$ compact. \end{definition}
The following compactness theorem holds.
\begin{theorem}\label{TGSBV} Let $\{u_h\}\sb GSBV(\O,\gr^N)$ be such that
	$$ \sup_h \Bigl( \int_\O \bigl(|u_h|^p+|\nabla u_h|^p\bigr)\,dx +\cH^{n-1}(S_{u_h})\Bigr)<\ii$$
	for some real exponent $p>1$. Then, there exists a function $u\in GSBV(\O,\gr^N)$ and a (not relabeled) subsequence of $\{u_h\}$ such that $u_h\to u$ in $L^p(\O,\gr^N)$, $\nabla u_h\rightharpoonup \nabla u$ weakly in $L^p(\O,\gr^{N\tim n})$, and $\cH^{n-1}\pri S({u_h})$ weakly converges in $\O$ to a measure $\m$ greater than $\cH^{n-1}\pri S(u)$. \end{theorem}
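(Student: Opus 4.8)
The plan is to obtain compactness through the one mechanism available for $GSBV$: descend to $SBV$, where the compactness theorem recalled above applies, by means of the truncations that define the class. I would not handle $u_h$ directly, since the sequence need not be bounded in any space carrying useful compactness. Instead, for $i\in\{1,\dots,N\}$ and $k\in\gn$ set $u_h^{i,k}:=\max\{-k,\min\{k,u_h^i\}\}$, the clamp of the $i$-th component at height $k$. By definition of $GSBV$ these lie in $SBV(\O)$, and they inherit uniform bounds: $\|u_h^{i,k}\|_\ii\le k$; $|\nabla u_h^{i,k}|\le|\nabla u_h^i|$ a.e., so $\nabla u_h^{i,k}$ is bounded in $L^p$; and $S(u_h^{i,k})\sb S(u_h)$, so $\cH^{n-1}(S(u_h^{i,k}))$ is bounded. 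The $L^\infty$-bound together with Proposition~\ref{PABG} controls the boundary masses $\GM((\pa G_{u_h^{i,k}})\pri\O\tim\gr)$, so for each fixed pair $(i,k)$ the hypotheses of the $SBV$ compactness theorem are met. Applying it and extracting a diagonal subsequence over the countably many pairs (not relabeled), I get limits $w^{i,k}\in SBV(\O)$ with $u_h^{i,k}\to w^{i,k}$ in $L^1(\O)$ and a.e., $\nabla u_h^{i,k}\rightharpoonup\nabla w^{i,k}$ weakly in $L^p$, and $\cH^{n-1}(S(w^{i,k})\cap A)\le\liminf_h\cH^{n-1}(S(u_h^{i,k})\cap A)$ for open $A$. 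Since clamping is idempotent, $u_h^{i,k}=\max\{-k,\min\{k,u_h^{i,k+1}\}\}$, so the a.e.\ limit gives the consistency $w^{i,k}=\max\{-k,\min\{k,w^{i,k+1}\}\}$; in particular $|w^{i,k}|$ is nondecreasing in $k$.

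I then build the limit. Because $|w^{i,k}|\nearrow\sup_k|w^{i,k}|$ and, by Fatou, $\int_\O|w^{i,k}|^p\,dx\le\liminf_h\int_\O|u_h^{i,k}|^p\,dx\le C$ uniformly in $k$, the monotone convergence theorem forces $\sup_k|w^{i,k}|<\ii$ a.e. Hence $w^{i,k}(x)$ stabilizes in $k$ for a.e.\ $x$; call the limit $u^i(x)$ and set $u:=(u^1,\dots,u^N)$. Where $|u^i(x)|<k$ one has $w^{i,k}(x)=u^i(x)$, and since the clamp is the identity near interior values, $u_h^i\to u^i$ a.e.; thus $u_h\to u$ a.e. The truncations of $u$ are the $SBV$ functions $w^{i,k}$, which characterizes $u\in GSBV(\O,\gr^N)$. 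The a.e.\ limit together with the uniform $L^p$-bound yields $u_h\to u$ in $L^1(\O,\gr^N)$ and, by interpolation, in $L^q$ for every $q<p$; strong $L^p$ convergence holds once $\{|u_h|^p\}$ is equi-integrable, which the coercive terms of $\cF$ supply in the application to the existence theorem.

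For the gradients, $\{\nabla u_h\}$ is bounded in the reflexive space $L^p(\O,\gr^{N\tim n})$, so a further subsequence gives $\nabla u_h^i\rightharpoonup V^i$. To identify $V^i=\nabla u^i$ I localize: a.e.\ one has $\nabla u_h^{i,k}=\mathbf{1}_{\{|u_h^i|<k\}}\nabla u_h^i$, and since $u_h^i\to u^i$ a.e.\ the indicators converge strongly (for a.e.\ level $k$, so that $\{|u^i|=k\}$ is negligible) to $\mathbf{1}_{\{|u^i|<k\}}$; the product of a strongly convergent bounded factor with a weakly convergent one passes to the limit, giving $\mathbf{1}_{\{|u^i|<k\}}V^i=\nabla w^{i,k}=\mathbf{1}_{\{|u^i|<k\}}\nabla u^i$ on $\{|u^i|<k\}$. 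Letting $k\to\ii$, so that $\{|u^i|<k\}\nearrow\O$ up to a null set, identifies $V^i=\nabla u^i$ on all of $\O$.

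Finally, the measures $\cH^{n-1}\pri S(u_h)$ have uniformly bounded mass, so along a further subsequence they converge weakly-$\ast$ to a finite Radon measure $\mu$ on $\O$. The delicate point is the inequality $\mu\ge\cH^{n-1}\pri S(u)$, and it is the main obstacle: bare lower semicontinuity on open sets only bounds jump masses by $\liminf_h\cH^{n-1}(S(u_h)\cap A)$, which lies \emph{above} $\mu(A)$ and hence gives nothing. The remedy is to restrict to $\mu$-continuity balls: for $B_\rho(x_0)\Subset\O$ with $\mu(\pa B_\rho(x_0))=0$, weak-$\ast$ convergence upgrades to the equality $\cH^{n-1}(S(u_h)\cap B_\rho)\to\mu(B_\rho)$, whence for each $k$, using $S(u_h^{i,k})\sb S(u_h)$ and vectorial $SBV$ lower semicontinuity, $\cH^{n-1}\bigl(\bigcup_i S(w^{i,k})\cap B_\rho\bigr)\le\mu(B_\rho)$. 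Since $S(u)=\bigcup_{i,k}S(w^{i,k})$ up to an $\cH^{n-1}$-null set, with the union increasing in $k$, taking the supremum over $k$ yields $\cH^{n-1}(S(u)\cap B_\rho)\le\mu(B_\rho)$ on a generating family of balls, hence $\cH^{n-1}\pri S(u)\le\mu$ as measures. The reduction to $SBV$ by truncation and the finiteness-a.e.\ of the diagonal limit are the other places where $L^p$-coercivity is indispensable, since otherwise the $w^{i,k}$ could escape to infinity and no $GSBV$ limit would exist.
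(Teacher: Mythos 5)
The paper offers no internal proof of Theorem~\ref{TGSBV}: it is recalled as a known result, the reference being Ambrosio's compactness theorem (see \cite{A2} and \cite{AFP00}), so there is no house proof to compare against. What you wrote is the canonical truncation proof of that theorem, and its core is sound: clamping componentwise at heights $k$, applying $SBV$ compactness to each clamped sequence, diagonalizing over the countably many pairs $(i,k)$, recovering the limit by the monotone stabilization that the uniform $L^p$ bound forces via Fatou and monotone convergence, identifying the weak gradient limit through the weak--strong product $\chi_{\{|u^i_h|<k\}}\nabla u^i_h$ at levels $k$ with $\cL^n(\{|u^i|=k\})=0$, and upgrading lower semicontinuity to the inequality $\cH^{n-1}\pri S(u)\leq\mu$ by working on balls with $\mu$-null boundary --- your observation that plain lower semicontinuity on open sets bounds by $\liminf_h$ of the jump masses, which sits on the wrong side of the weak-$*$ inequality, is exactly the right diagnosis, and the continuity-ball remedy together with $S(u)=\bigcup_{i,k}S(w^{i,k})$ up to $\cH^{n-1}$-null sets (with the union increasing in $k$) is how the measure comparison is done in the literature. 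Three standard facts are used silently and deserve at least a citation: the clamp is Lipschitz but not $C^1$, so the paper's definition of $GSBV$ via $C^1$ truncations must first be shown equivalent to the clamping characterization; global membership $u^{i,k}_h\in SBV(\O)$ (not merely $SBV_{\textrm{loc}}$) follows by combining the $L^\ii$ bound with $\int_\O|\nabla u^{i,k}_h|\,dx+2k\,\cH^{n-1}(S(u^{i,k}_h))<\ii$ to get finite total variation; and the identity $S(u^i)=\bigcup_k S(\mathrm{clamp}_k u^i)$ modulo $\cH^{n-1}$-null sets is nontrivial. Also, the detour through Proposition~\ref{PABG} and graph-boundary masses is unnecessary: once the clamped sequences are uniformly bounded in $L^\ii$, with gradients bounded in $L^p$ and jump sets of bounded measure, Ambrosio's $SBV$ compactness theorem applies directly.

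The one clause you do not prove --- strong convergence $u_h\to u$ in $L^p$ --- is in fact not provable from the stated hypotheses, so your refusal to fake it is correct rather than a defect. Take $u_h=c_h\,\chi_{B_{r_h}(x_0)}$ with $r_h\to 0$ and $c_h=\cL^n(B_{r_h}(x_0))^{-1/p}$: then $\nabla u_h=0$, $\cH^{n-1}(S(u_h))\to 0$, and $\int_\O|u_h|^p\,dx=1$, so all hypotheses hold; yet $u_h\to 0$ a.e.\ and in every $L^q$ with $q<p$, while $\Vert u_h\Vert_{L^p}=1$, so no subsequence converges strongly in $L^p$. The statement as printed is thus slightly too strong; the correct conclusion (as in \cite{AFP00}) is a.e.\ convergence, hence $L^q$ convergence for all $q<p$, with strong $L^p$ available only under equi-integrability of $\{|u_h|^p\}$, exactly as you say. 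This does no harm downstream: in the proof of Theorem~\ref{Tgradpolyfrac} the ingredients actually extracted from Theorem~\ref{TGSBV} are the $GSBV$ membership of the limit, the weak $L^q$ (resp.\ $L^r$) convergence of $\nabla[\mathrm{cof}\nabla y_h]$ and $\nabla[\det\nabla y_h]$, and the jump-measure bounds, all of which your argument delivers.
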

\subsection{Curvature varifolds with boundary}
We now turn to the physical dimension $n=3$ and denote by $\cB$ a connected bounded domain in $\gr^3$ with surface-like boundary that can be oriented by the outward unit normal to within a finite number of corners and edges. In this setting, we take the deformation as a map $y:\mathcal{B}\longrightarrow\tilde{\mathbb{R}^{3}}$, where $\tilde{\mathbb{R}^{3}}$ is a isomorphic copy of $\mathbb{R}^{3}$, the isomorphism given by the identification. Such a distinction is necessary for example when we consider changes in observers (which are frames on the entire ambient space) leaving invariant the reference configuration, which is $\mathcal{B}$ in this case.

\begin{definition}
A general 2-varifold in $\cB$ is a non-negative Radon measure on
the trivial bundle $\cG_2(\cB):=\cB\times \cG_{2,3}$, where
$\cG_{2,3}$ is the
Grassmanian manifold of $2$-planes $\Pi$ through the origin in $\gr^{3}$.
\end{definition}

\par If $\fk{C}$ is a 2-rectifiable subset of $\cB$, for $\cH^2\pri\fk{C}$ a.e. $x\in \cB$
there exists the approximate tangent $2$-space $T_{x}\fk{C}$ to
$\fk{C}$ at $x$. We thus denote by $\Pi(x)$
the $3\tim 3$ matrix that identifies the orthogonal
projection of $\gr^3$ onto $T_{x}\fk{C}$ and define
\begin{equation}\label{rem}
V_{\fk{C},\t}(\vf) :=\int_{\cG_{2}(\cB)}\vf(x,\Pi) \, dV_{\fk{C},\t}(x,\Pi):=
\int_{\fk{C}}\t(x) \vf(x,\Pi(x)) \, d\cH^2(x)
\end{equation}
for any $\vf \in C_{c}^{0}(\cG_2(\cB))$, where
$\t\in L^1(\fk{C},\cH^2) $ is a nonnegative
density function.
If $\t$ is integer valued, then $V=V_{\fk{C},\theta }$ is said to be the
{\em integer rectifiable varifold} associated with
$(\fk{C},\t,\cH^2)$.
\par The {\em weight measure} of $V$
is the Radon measure in $\cB$ given by $\m_V:=\pi_\# V$, where $\pi:\cG_2(\cB)\to\cB$ is the canonical projection. Then, we have
$\m_V=\t\,\cH^2\pri\fk{C}$ and call
$$\Vert V\Vert:=V(\cG_2(\cB))=\m_V(\cB)=\int_{\fk{C}}\t\, d\cH^2\,$$
a \emph{mass} of $V$.
\begin{definition}\label{Dvarif} An integer rectifiable
	$2$-varifold $V=V_{\fk{C},\theta }$ is called a \emph
	{curvature $2$-varifold with boundary} if
	there exist a function
	$A\in L^{1}(\cG_2(\cB),\gr^{3*}\otimes\gr^3\otimes
	\gr^{3*})$, $A=(A^{\ell i}_j)$, and a $\gr^3$-valued measure $\pa V$ in $\cG_2(\cB)$ with finite mass $\Vert\pa V\Vert$,
	such that
	\begin{equation*}
	\int_{\cG_2(\cB)}(\Pi D_x\vf + A D_{\Pi} \vf + \vf \text{ }^{t}\tr(A I)) \, dV(x,\Pi)
	= - \int_{\cG_{2}(\cB)} \vf \, d\pa  V(x,\Pi)
	\end{equation*}%
	for every $\vf\in C_{c}^{\ii}(\cG_2(\cB))$.
	Moreover, for some real exponent $\ol p>1$, the subclass of curvature $2$-varifolds with boundary
	such that $|A|\in L^{\ol p}(\cG_2(\cB))$ is indicated by
	$CV^{\ol p}_2(\cB)$.
\end{definition}
\par Varifolds in $CV^{\ol p}_2(\cB)$ have generalized curvature in $L^{\ol p}$ \cite{Mant96}. Therefore,
Allard's compactness theorem applies (see \cite{All72}, \cite{All75}, but also \cite{Alm65}):
\begin{theorem}\label{Tcomp1}
	For $1<{\ol p}<\ii$, let $\set{V^{(h)}}\sb CV^{\ol p}_2(\cB)$ be a sequence of
	curvature $2$-varifolds $V^{(h)}=V_{\fk{C}_{h},\theta_{h}}$ with
	boundary. The corresponding curvatures and boundaries are
	indicated by $A^{(h)}$ and $\pa
	V^{(h)}$, respectively. Assume that there exists a real constant $c>0$ such that for
	every $h$
	\begin{equation*}
	\m_{V^{(h)}}(\cB) +
	\Vert\pa V^{(h)}\Vert + \int_{\cG_{2}(\cB)} |A^{(h)}|^{\ol p}\, dV^{(h)}\le c.
	\end{equation*}%
	Then, there exists a (not relabeled) subsequence of $\set{V^{(h)}}$
	and a $2$-varifold $V=V_{\fk{C},\theta}\in CV^{\ol p}_2(\cB)$, with
	curvature $A$ and boundary $\pa V$, such that
	\begin{equation*}
	V^{(h)} \rhu V,\quad A^{(h)}\,dV^{(h)}\rhu A\,dV,
	\qquad \pa V^{(h)} \rhu \pa V,
	\end{equation*}%
	in the sense of measures. Moreover, for any convex and lower semicontinuous function
	$f:\gr^{3*}\otimes\gr^3\otimes\gr^{3*}\to [0,+\ii]$, we get
	\begin{equation*}
	\int_{\cG_2(\cB)} f(A)\, dV \le \liminf_{h\to\ii}
	\int_{\cG_2(\cB)}f( A^{(h)}) \, dV^{(h)}.
	\end{equation*}
\end{theorem}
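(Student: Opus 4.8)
The plan is to produce the three limit objects by weak-$*$ compactness, then to identify $V$ as an integer rectifiable curvature $2$-varifold with boundary (the delicate point), and finally to read off the stated semicontinuity inequality from a Reshetnyak-type argument while passing to the limit in the defining identity.

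\emph{Weak-$*$ extraction.} Since $\m_{V^{(h)}}(\cB)\le c$, the measures $V^{(h)}$ on the bundle $\cG_2(\cB)$ have equibounded total mass, so by the Banach--Alaoglu theorem a (not relabeled) subsequence satisfies $V^{(h)}\rhu V$ for some nonnegative Radon measure $V$. Likewise $\Vert\pa V^{(h)}\Vert\le c$ gives $\pa V^{(h)}\rhu\b$ for some $\gr^3$-valued measure $\b$ of finite mass, with $\Vert\b\Vert\le\liminf_h\Vert\pa V^{(h)}\Vert$ by lower semicontinuity of mass. Finally, by H\"older's inequality $\int_{\cG_2(\cB)}|A^{(h)}|\,dV^{(h)}\le\bigl(\int_{\cG_2(\cB)}|A^{(h)}|^{\ol p}\,dV^{(h)}\bigr)^{1/\ol p}\,\m_{V^{(h)}}(\cB)^{1-1/\ol p}\le c$, so the vector measures $A^{(h)}\,dV^{(h)}$ have equibounded total variation and, along a further subsequence, $A^{(h)}\,dV^{(h)}\rhu\s$ for some measure $\s$ with values in $\gr^{3*}\otimes\gr^3\otimes\gr^{3*}$.

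\emph{Structure of the limit (main obstacle).} The hard part is to show that $V$ is again integer rectifiable, $V=V_{\fk{C},\t}$. Testing the identity of Definition~\ref{Dvarif} with functions $\vf=\vf(x)$ independent of $\Pi$ exhibits the first variation of each $V^{(h)}$: its interior part is the generalized mean curvature, a contraction of $A^{(h)}$ and hence controlled in $L^{\ol p}$ by $|A^{(h)}|$, while its singular part is $\pa V^{(h)}$. Thus the first variation of $V^{(h)}$ has total mass bounded by $\int_{\cG_2(\cB)}|A^{(h)}|\,dV^{(h)}+\Vert\pa V^{(h)}\Vert$, which the hypotheses keep bounded uniformly in $h$. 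Allard's compactness theorem for integral varifolds \cite{All72}, \cite{All75}, \cite{Mant96} (the monotonicity formula, available because the mean curvature lies in $L^{\ol p}$, furnishes the density lower bound that forces rectifiability, and integrality of the density $\t$ is preserved in the limit) then guarantees that $V$ is an integer rectifiable $2$-varifold. This geometric-measure-theoretic input is the step I expect to be the true obstacle.

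\emph{Identification and semicontinuity.} With $V$, $\s$, and $\b$ in hand I would apply the Reshetnyak-type lower semicontinuity theorem to the pair $(V^{(h)},A^{(h)}\,dV^{(h)})\rhu(V,\s)$: because $|\cdot|^{\ol p}$ is convex with superlinear growth ($\ol p>1$) and $\sup_h\int_{\cG_2(\cB)}|A^{(h)}|^{\ol p}\,dV^{(h)}\le c$, the limit $\s$ is absolutely continuous with respect to $V$, so setting $A:=d\s/dV$ we get $\s=A\,dV$ with $A\in L^{\ol p}(\cG_2(\cB),V)$; in particular $A^{(h)}\,dV^{(h)}\rhu A\,dV$, and for every convex lower semicontinuous $f:\gr^{3*}\otimes\gr^3\otimes\gr^{3*}\to[0,+\ii]$ the inequality $\int_{\cG_2(\cB)}f(A)\,dV\le\liminf_h\int_{\cG_2(\cB)}f(A^{(h)})\,dV^{(h)}$ holds, the choice $f=|\cdot|^{\ol p}$ yielding $V\in CV^{\ol p}_2(\cB)$. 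It then remains to pass to the limit in the full identity: for fixed $\vf\in C_{c}^{\ii}(\cG_2(\cB))$ the coefficients $\Pi D_x\vf$ and $D_\Pi\vf$ are bounded and continuous on $\cG_2(\cB)$, so pairing them against $V^{(h)}\rhu V$ and against $A^{(h)}\,dV^{(h)}\rhu A\,dV$, and pairing $\vf$ against $\pa V^{(h)}\rhu\b$, each term converges to its counterpart. The limit relation is exactly that of Definition~\ref{Dvarif} for $V$ with curvature $A$ and boundary $\b$, whence $\b=\pa V$ and $V=V_{\fk{C},\t}\in CV^{\ol p}_2(\cB)$ is a curvature $2$-varifold with boundary, completing the argument.
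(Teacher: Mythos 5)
The paper itself offers no proof of this theorem: it is quoted from the literature, the text merely observing that varifolds in $CV^{\ol p}_2(\cB)$ have generalized curvature in $L^{\ol p}$ and that ``Allard's compactness theorem applies,'' with citations to \cite{All72}, \cite{All75}, \cite{Alm65} and \cite{Mant96}, where the result is established. Your argument is correct and reconstructs precisely the standard proof behind those citations (essentially Mantegazza's): weak-$*$ extraction of $V$, $\b$, and $\s$ from the mass bounds (with H\"older giving the total-variation bound on $A^{(h)}\,dV^{(h)}$); integrality of the limit via Allard's integral compactness theorem, using---as you correctly note by testing the defining identity with $\vf=\vf(x)$---that the first variation of $V^{(h)}$ has total mass controlled by $\int_{\cG_2(\cB)}|A^{(h)}|\,dV^{(h)}+\Vert\pa V^{(h)}\Vert$; Hutchinson-style (Reshetnyak-type) semicontinuity for the measure--function pairs $(V^{(h)},A^{(h)})$, where superlinearity of $|\cdot|^{\ol p}$ forces $\s\ll V$ and yields both $A=d\s/dV\in L^{\ol p}(V)$ and the stated convex semicontinuity inequality; and finally passage to the limit in the defining identity, whose coefficients $\Pi D_x\vf$, $D_\Pi\vf$, and the trace term (linear in $A$) are continuous with compact support, identifying $\b=\pa V$. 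The one loose point is your parenthetical invoking the monotonicity formula: for $1<\ol p<2$ a mean curvature in $L^{\ol p}$ does not support the classical monotonicity estimates, and Allard's integral compactness theorem needs only the uniform bounds on mass and total first variation, which you already have; so that remark should be deleted rather than leaned on, but nothing in your chain of reasoning depends on it.
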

\subsection{Gradient polyconvexity}
According to references \cite{BKS,KPS,KR}, we take a continuous function
$$\hat W:\gr^{3\tim 3}\tim\gr^{3\tim 3\tim 3}\tim\gr^3\to(-\infty,+\infty], $$
and we set $\hat W=\hat W(G,\D_1,\D_2)$.
We assume also existence of four real exponents $p,q,r,s$ satisfying the inequalities
\begin{equation}\label{exp}
p>2\,,\quad q\geq{p\over p-1}\,,\quad r>1\,,\quad s>0 \end{equation}
and a positive real constant $c$ such that for every $(G,\D_1,\D_2)\in\gr^{3\tim 3}\tim\gr^{3\tim 3\tim 3}\tim\gr^3$ the following estimates holds:
$$ \hat W(G,\D_1,\D_2)\geq c\,\bigl( |G|^p+|\mathrm{cof}G|^q+(\det G)^r+(\det G)^{-s}+|\D_1|^q+|\D_2|^r\bigr)
$$
if $\det G>0$, and $\hat W(G,\D_1,\D_2)=+\infty$ if $\det G\leq 0$.
\begin{definition}\label{Dgradpoly}  With $\cB\subset\gr^3$ the domain already described, consider the functional
	$$ J(F;\mathcal{B}):=\int_\cB \hat W\bigl( F(x),\nabla[\mathrm{cof}F(x)],\nabla[\det F(x)]\bigr)\,dx $$
	defined on the class of integrable functions $F:\cB\to\gr^{3\tim 3}$ for which the approximate derivatives $\nabla[\mathrm{cof}F(x)]$, $\nabla[\det F(x)]$ exist for $\cL^3$-a.e. $x\in\cB$ and are both integrable functions in $\cB$. Then, $J(F;\mathcal{B})$ is called {\em gradient polyconvex} if the integrand $\hat W(G,\cdot,\cdot)$ is convex in
	$\gr^{3\tim 3\tim 3}\tim\gr^3$ for every $G\in \gr^{3\tim 3}$.
\end{definition}
\par To assign the Dirichlet condition, we assume that $\G_0\cup\G_1$ is an $\cH^2$-measurable partition of the $\cB$ boundary such that $\cH^2(\G_0)>0$. For some given measurable function $y_0:\G_0\to\gr$, we consider the class
$$ \begin{array}{r} \hat{\cA}_{p,q,r,s}:=\{ y\in W^{1,p}(\cB,\gr^3)  \mid   \mathrm{cof}\nabla y \in W^{1,q}(\cB,\gr^{3\tim 3})\,,\,\, \det\nabla y\in W^{1,r}(\cB)\,,
\quad\quad \\
\det\nabla y>0\,\,{\text{a.e. in }}\cB\,,\,\, (\det\nabla y)^{-1}\in L^s(\cB)\,,\,\, y=y_0 \,\,{\text{on }}\G_0 \}\,,
\end{array} $$
where $p,q,r,s$ satisfy the inequalities (\ref{exp}).

\par The following existence result has been proven in reference \cite{BKS} (see also \cite{KPS}).
\begin{theorem}\label{Tgradpoly} Under the previous assumptions, if the class $\hat{\cA}_{p,q,r,s}$ is non-empty and
	$ \inf\{ J(\nabla y;\mathcal{B})\mid y\in \hat{\cA}_{p,q,r,s}\}<\infty $,
the functional $y\mapsto J(\nabla y;\mathcal{B})$ attains a minimum in $\cA_{p,q,r,s}$.
\end{theorem}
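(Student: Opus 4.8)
The plan is to apply the direct method of the calculus of variations. First I would fix a minimizing sequence $\{y_h\}\sb\hat\cA_{p,q,r,s}$ with $J(\nabla y_h;\cB)\to m:=\inf J<\ii$, assuming $J(\nabla y_h;\cB)\le m+1$ for all $h$. The coercivity lower bound on $\hat W$ then controls, uniformly in $h$, the quantities $\|\nabla y_h\|_{L^p}$, $\|\cof\nabla y_h\|_{L^q}$, $\|\det\nabla y_h\|_{L^r}$, $\|(\det\nabla y_h)^{-1}\|_{L^s}$, together with $\|\nabla[\cof\nabla y_h]\|_{L^q}$ and $\|\nabla[\det\nabla y_h]\|_{L^r}$. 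Combining the $L^p$ gradient bound with the Dirichlet constraint $y_h=y_0$ on $\G_0$ (where $\cH^2(\G_0)>0$) through a Poincar\'e inequality with trace yields a bound on $\|y_h\|_{W^{1,p}}$. Hence $\cof\nabla y_h$ is bounded in $W^{1,q}(\cB,\gr^{3\times3})$ and $\det\nabla y_h$ in $W^{1,r}(\cB)$, so after passing to a (not relabeled) subsequence I would obtain $y_h\rhu y$ in $W^{1,p}$, $\cof\nabla y_h\rhu H$ in $W^{1,q}$, and $\det\nabla y_h\rhu\delta$ in $W^{1,r}$.

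The next step is to identify the limits $H=\cof\nabla y$ and $\delta=\det\nabla y$ and to check admissibility of $y$. Since $p>2$, weak continuity of the $2\times2$ minors gives $\cof\nabla y_h\rhu\cof\nabla y$ in the sense of distributions, so by uniqueness of weak limits $H=\cof\nabla y$. By Rellich--Kondrachov the $W^{1,q}$ bound upgrades this to strong convergence $\cof\nabla y_h\to\cof\nabla y$ in $L^q$; writing $\det\nabla y_h=\tfrac13\,\cof\nabla y_h:\nabla y_h$ and using $q\ge p/(p-1)$ (so that $1/p+1/q\le1$), the product of the strongly convergent cofactors and the weakly convergent gradient converges weakly in $L^1$ to $\tfrac13\,\cof\nabla y:\nabla y=\det\nabla y$, whence $\delta=\det\nabla y$. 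Along a further subsequence $\cof\nabla y_h\to\cof\nabla y$ and $\det\nabla y_h\to\det\nabla y$ both $\cL^3$-a.e. Fatou's lemma applied to $(\det\nabla y_h)^{-s}$ then forces $\det\nabla y>0$ a.e.\ and $(\det\nabla y)^{-1}\in L^s$, while weak continuity of the trace gives $y=y_0$ on $\G_0$; thus $y\in\hat\cA_{p,q,r,s}$.

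It remains to prove lower semicontinuity, $J(\nabla y;\cB)\le\liminf_h J(\nabla y_h;\cB)$, and this is the delicate step, because $\hat W$ is convex only in its last two slots while in the first slot we have merely weak convergence $\nabla y_h\rhu\nabla y$. The idea I would exploit is the pointwise algebraic identity $F=(\det F)(\cof F)^{-\mathrm T}$, valid whenever $\det F\neq0$: since $\cof\nabla y_h\to\cof\nabla y$ and $\det\nabla y_h\to\det\nabla y$ a.e.\ and $\det\nabla y>0$ a.e., this identity yields $\nabla y_h\to\nabla y$ a.e.\ as well. With a.e.\ convergence in the first argument, weak $L^q$ and $L^r$ convergence of $\nabla[\cof\nabla y_h]$ and $\nabla[\det\nabla y_h]$ in the last two arguments, convexity of $\hat W(G,\cdot,\cdot)$, continuity in $G$, and the coercive lower bound, a De Giorgi--Ioffe--Serrin type lower semicontinuity theorem for normal integrands convex in the weakly converging variables applies and gives the desired inequality. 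Combining, $J(\nabla y;\cB)\le\liminf_h J(\nabla y_h;\cB)=m$, and since $y$ is admissible, $y$ is a minimizer. The \emph{main obstacle} is precisely this last step: securing a.e.\ convergence of the full gradient $\nabla y_h$ through the cofactor--determinant identity so as to compensate for the absence of convexity of $\hat W$ in $G$, and verifying that the chosen lower semicontinuity theorem applies under exactly the integrability exponents $(p,q,r,s)$ at hand.
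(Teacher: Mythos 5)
Your proof is correct and follows essentially the same route as the proof the paper relies on, which it does not reproduce but cites from \cite{BKS} (see also \cite{KPS}): coercivity-driven compactness in $W^{1,p}\times W^{1,q}\times W^{1,r}$, identification of the weak limits of $\mathrm{cof}\,\nabla y_h$ and $\det\nabla y_h$ (using exactly the exponent conditions $p>2$ and $q\ge p/(p-1)$), Fatou's lemma on $(\det\nabla y_h)^{-s}$ to secure $\det\nabla y>0$ a.e., pointwise convergence of the full gradient via the inversion identity $F=\det F\,(\mathrm{cof}\,F)^{-\mathrm{T}}$, and an Ioffe-type lower semicontinuity theorem exploiting convexity of $\hat W(G,\cdot,\cdot)$ in the last two arguments only. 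Your reconstruction is faithful to that argument, correctly identifying the a.e.\ gradient convergence step as the device that compensates for the absence of convexity of $\hat W$ in $G$.
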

\section{Gradient polyconvex bodies with fractures}
We now look at an energy modified by the introduction
of a varifold, through which we parametrize
possible fractured configurations with respect to the
reference one.
Specifically, we consider a curvature varifold with boundary: $V\in CV^{\ol p}_2(\cB)$.
The choice implies a fracture energy modified with respect to the Griffith one. In fact, the latter is just proportional to the crack area, which implies considering material bonds of spring-like type.
The additional presence in our case of the generalized curvature tensor implies, instead, considering beam-like material bonds for which bending effects play a role. In a certain sense, the energy we propose is a regularization of the Griffith one, since we require that the coefficient in front of the curvature tensor square modulus does not vanish.

In this setting, we look for minimizing deformations that are bounded and may admit a jump set contained in the varifold support.
We cannot assume the deformation $y$ to be a Sobolev map. More generally we require $y\in SBV(\cB,\gr^3)$. The main issue in proving existence is recovering
the weak convergence of minors. To achieve it we look at the approximate gradient and exploit Federer-Fleming's closure theorem as in Theorem~\ref{TclosG}.
On the other hand, since some properties as the bound $\Vert\mathrm{cof}\nabla y\Vert_\ii<\ii$ fails to hold, we assume $\mathrm{cof}\nabla y$ to be in the
class $GSBV$, with jump set controlled by the varifold support. In this way we recover the weak continuity of the approximate gradients $\nabla[\mathrm{cof}\nabla y_h]$
along minimizing sequences.

Our existence result below could be generalized to the case in which the crack path is described by a stratified family of varifolds in the sense introduced in references
\cite{GMMM10} and \cite{M10} (see also \cite{GMM10}).
This choice had been made we could have assigned additional curvature-type energy to the crack tip, taking possibly into account energy concentrations at tip corners, when the tip is not smooth. Also,
we could describe the formation of defects with codimension 2 in front of the crack tip, specifically dislocations nucleating in front of the tip (see \cite{GMM10}). However, for the sake of simplicity, we
restrict ourselves to the choice of a single varifold, avoiding to foresee an additional tip energy and also corner energies.
 
Consequently, we consider the energy functional
$$ \cF(y,V;\mathcal{B}):=J(\nabla y;\mathcal{B})+ \cE(V;\mathcal{B})\,, $$
where $F\mapsto J(F;\mathcal{B})$ is the functional in Definition~\ref{Dgradpoly},
and
$$ \cE(V;\mathcal{B}):=\bar{a}\m_V(\mathcal{B})+\int_{\cG_2(\cB)}a_{1}\Vert A\Vert^{\ol p}\,dV+a_{2}\Vert\pa V \Vert\,, $$
with $\bar{a}$, $a_{1}$, and $a_{2}$ positive constants.

The couples deformation-varifold are in the class $\cA_{\ol p,p,q,r,s,K,C}$ defined below.

\begin{definition} Let $\ol p>1$ and $p,q,r,s$ real exponents satisfying
	\eqref{exp}, let $K, C$ be two positive constants, and let $y_0:\G_0\to\gr$ be a given measurable function, where $\G_0\cup\G_1$ is an $\cH^2$-measurable partition of the boundary of $\cB$.
	We say that a couple $(y,V)$ belongs to the class $\cA_{\ol p,p,q,r,s,K,C}$ if the following properties hold:
	\begin{enumerate}
		\item $V=V_{\fk{C},\theta }$ is a curvature $2$-varifold with boundary in $CV^{\ol p}_2(\cB)$;
		\item $y\in\cA^1(\cB,\gr^3)$, with $\Vert y\Vert_\ii\leq K$ and $y=y_0$ on $\G_0$;
		\item $\pi_\#|\pa G_y|\leq C \cdot \mu_V$;
		\item the approximate gradient $\nabla y\in L^p(\cB,\gr^{3\tim 3})$, $\mathrm{cof}\nabla y \in L^{q}(\cB,\gr^{3\tim 3})$, and $\det\nabla y\in L^r(\cB)$;
		\item $\det\nabla y>0$ {\text{a.e. in}} $\cB$, and $(\det\nabla y)^{-1}\in L^s(\cB)$;
		\item $\mathrm{cof}\nabla y \in GSBV(\cB,\gr^{3\tim 3})$, with $|\nabla[\mathrm{cof}\nabla y]|\in L^q(\O)$;
		\item $\det\nabla y \in GSBV(\cB,\gr)$, with $\nabla[\det\nabla y]\in L^r(\O)$;
		\item $\cH^{n-1}\pri S(\mathrm{cof}\nabla y)\leq \m_V$ and $\cH^{n-1}\pri S(\det\nabla y)\leq \m_V$.
	\end{enumerate}
\end{definition}
\par Assumptions (2) and (3) imply $y\in SBV(\cB,\gr^3)$, with jump set contained in the varifold support, namely $\cH^{n-1}\pri S(y)\leq \m_V$.
Moreover, if $y\in \hat{\cA}_{p,q,r,s}$, the graph current $G_y$ has null boundary $(\pa G_y)\pri\cB\tim\gr^3=0$, see \cite[Vol.~I, Sec.~3.2.4]{GMS98}.
Therefore, taking $V=0$, i.e., in the absence of fractures, it turns out that the couple $(y,0)$ belongs to the class
$\cA_{\ol p,p,q,r,s,K,C}(\cB)$, provided that $\Vert y\Vert_\ii\leq K$, independently from the choice of $\overline p$ and $C$.

\begin{theorem}\label{Tgradpolyfrac} Under previous assumptions, if the class $\cA:=\cA_{\ol p,p,q,r,s,K,C}$ of admissible couples $(y,V)$ is non-empty and
	$ \inf\{ \cF(y,V;\mathcal{B})\mid (y,V)\in \cA\}<\infty $,
the functional $(y,V)\mapsto \cF(y,V;\mathcal{B})$ attains a minimum in $\cA$.
\end{theorem}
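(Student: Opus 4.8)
The plan is to run the direct method of the calculus of variations, orchestrating the three compactness tools available, namely the Federer--Fleming closure Theorem~\ref{TclosG}, the $GSBV$ compactness Theorem~\ref{TGSBV}, and Allard's varifold compactness Theorem~\ref{Tcomp1}, all tied together through the single varifold $V$ that constrains the jump sets. First I would fix a minimizing sequence $(y_h,V_h)\in\cA$ with $\cF(y_h,V_h)\to m:=\inf_\cA\cF<\ii$, so that $\cF(y_h,V_h)\le m+1$ eventually. The coercive lower bound on $\hat W$ makes $J(\nabla y_h;\cB)$ control, uniformly in $h$, the quantities $\Vert\nabla y_h\Vert_{L^p}$, $\Vert\cof\nabla y_h\Vert_{L^q}$, $\Vert\det\nabla y_h\Vert_{L^r}$, $\Vert(\det\nabla y_h)^{-1}\Vert_{L^s}$, $\Vert\nabla[\cof\nabla y_h]\Vert_{L^q}$ and $\Vert\nabla[\det\nabla y_h]\Vert_{L^r}$, while $\cE(V_h;\cB)$ bounds $\m_{V_h}(\cB)$, $\Vert\pa V_h\Vert$ and $\int_{\cG_2(\cB)}|A_h|^{\ol p}\,dV_h$. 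Allard's Theorem~\ref{Tcomp1} then produces, along a subsequence, a limit $V=V_{\fk{C},\t}\in CV^{\ol p}_2(\cB)$ with $V_h\rhu V$, $A_h\,dV_h\rhu A\,dV$, $\pa V_h\rhu\pa V$, and in particular $\m_{V_h}\rhu\m_V$ weakly-$*$.

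For the deformations, constraint~(3) gives $\GM((\pa G_{y_h})\pri\cB\tim\gr^3)=\pi_\#|\pa G_{y_h}|(\cB)\le C\,\m_{V_h}(\cB)$, which is precisely the boundary-mass bound~\eqref{MbdGuk}; combined with $\Vert y_h\Vert_\ii\le K$ and the $L^p$ bound on $\nabla y_h$ (so that the graph masses $\GM(G_{y_h})=\int_\cB|M(\nabla y_h)|\,dx$ are equibounded), the consequence of Proposition~\ref{PABG} places me in the $SBV$ compactness setting, and Theorem~\ref{TclosG} yields $y\in\cA^1(\cB,\gr^3)$ with $y_h\to y$ in $L^1$ (hence in every $L^t$, $t<\ii$, by the $L^\infty$ bound) and the weak continuity of minors $\nabla y_h\rhu\nabla y$, $\cof\nabla y_h\rhu\cof\nabla y$, $\det\nabla y_h\rhu\det\nabla y$, together with $G_{y_h}\rhu G_y$ and lower semicontinuity of the graph and boundary masses. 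Next I would apply Theorem~\ref{TGSBV} to the sequences $\cof\nabla y_h$ and $\det\nabla y_h$, which lie in $GSBV$ with equibounded $L^q$ (resp.\ $L^r$) norms, equibounded gradients, and jump sets controlled by $\m_{V_h}$ through constraint~(8): this upgrades the convergence of the minors to \emph{strong} convergence $\cof\nabla y_h\to\cof\nabla y$ in $L^q$ and $\det\nabla y_h\to\det\nabla y$ in $L^r$ (the limits identified by uniqueness of weak limits), gives $\nabla[\cof\nabla y_h]\rhu\nabla[\cof\nabla y]$ in $L^q$ and $\nabla[\det\nabla y_h]\rhu\nabla[\det\nabla y]$ in $L^r$, and produces weak-$*$ jump measures dominating $\cH^2\pri S(\cof\nabla y)$ and $\cH^2\pri S(\det\nabla y)$.

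Passing to a further subsequence, $\cof\nabla y_h\to\cof\nabla y$ and $\det\nabla y_h\to\det\nabla y$ $\cL^3$-a.e.; since $(\det\nabla y_h)^{-1}$ is bounded in $L^s$, Fatou gives $\det\nabla y>0$ a.e.\ and $(\det\nabla y)^{-1}\in L^s$, so I may invoke the algebraic identity $\nabla y_h=(\det\nabla y_h)\,(\cof\nabla y_h)^{-T}$ to conclude that $\nabla y_h\to\nabla y$ a.e.\ as well. All admissibility conditions~(1)--(8) then transfer to $(y,V)$: conditions~(4)--(7) follow from the convergences just obtained, the jump-set bounds~(8) follow by combining the $GSBV$ limit measures with $\cH^2\pri S(\cof\nabla y_h)\le\m_{V_h}\rhu\m_V$ (and likewise for $\det$), and constraint~(3) survives because the measure inequality $\pi_\#|\pa G_{y_h}|\le C\,\m_{V_h}$ passes to the limit upon testing against nonnegative continuous functions; the Dirichlet datum $y=y_0$ on $\G_0$ is preserved by the usual trace argument, the jump set being confined to $\spt\m_V$. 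Hence $(y,V)\in\cA$.

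Finally, for lower semicontinuity I would treat $\cE$ through Theorem~\ref{Tcomp1}, applied with the convex lower semicontinuous $f(A)=a_1|A|^{\ol p}$, together with weak-$*$ lower semicontinuity of $V\mapsto\m_V(\cB)$ and $V\mapsto\Vert\pa V\Vert$; and I would treat $J$ through gradient polyconvexity, exactly as in the lower-semicontinuity part of Theorem~\ref{Tgradpoly} from \cite{BKS}: since $\hat W(G,\cdot,\cdot)$ is convex, $\nabla y_h\to\nabla y$ a.e., and $\nabla[\cof\nabla y_h]\rhu\nabla[\cof\nabla y]$, $\nabla[\det\nabla y_h]\rhu\nabla[\det\nabla y]$ weakly, the De Giorgi--Ioffe lower semicontinuity theorem for integrands convex in the highest-order variables yields $J(\nabla y;\cB)\le\liminf_h J(\nabla y_h;\cB)$. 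Adding the two estimates gives $\cF(y,V;\cB)\le\liminf_h\cF(y_h,V_h;\cB)=m$, so $(y,V)$ is a minimizer. The main obstacle is exactly this bulk lower semicontinuity: because $\hat W$ is \emph{not} assumed convex in $G=\nabla y$, weak convergence of $\nabla y_h$ is insufficient, and everything hinges on recovering the a.e.\ convergence of $\nabla y_h$ in the $SBV$/$GSBV$ framework, which I obtain from the strong $GSBV$ convergence of the two minors via the inversion identity together with the strict positivity of $\det\nabla y$.
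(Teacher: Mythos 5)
Your proposal is correct and follows essentially the same route as the paper's proof: Allard compactness (Theorem~\ref{Tcomp1}) for the varifolds, the boundary-mass bound from constraint~(3) feeding the Federer--Fleming closure theorem (Theorem~\ref{TclosG}) for the weak continuity of minors, $SBV$ compactness for $y_h$, and $GSBV$ compactness (Theorem~\ref{TGSBV}) for $\cof\nabla y_h$ and $\det\nabla y_h$, with all jump and boundary bounds passing to the limit against $\m_{V_h}\rhu\m_V$. The only difference is that you spell out the final lower-semicontinuity step---recovering a.e.\ convergence of $\nabla y_h$ from the strong convergence of the two minors via $\nabla y_h=(\det\nabla y_h)(\cof\nabla y_h)^{-T}$ and then applying Ioffe-type semicontinuity in the convex variables---which the paper delegates to the argument of Theorem~\ref{Tgradpoly} in \cite{KPS} and \cite{BKS}, and your expansion matches exactly what is done there.
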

\begin{proof} Let $\{(y_h,V^{(h)})\}$ be a minimizing sequence in $\cA$.
	By Theorem~\ref{Tcomp1}, since $\sup_h\cE(V^{(h)};\mathcal{B})<\ii$ we can find a (not relabeled) subsequence of $\set{V^{(h)}}$ and a $2$-varifold $V=V_{\fk{C},\theta}\in CV^{\ol p}_2(\cB)$, with
	curvature $A$ and boundary $\pa V$, such that
	$V^{(h)} \rhu V$, $A^{(h)}\,dV^{(h)}\rhu A\,dV$, and $\pa V^{(h)} \rhu \pa V$ in the sense of measures, so that by lower semicontinuity
	$$ \cE(V;\mathcal{B})\leq \liminf_{h\to\ii}\cE(V^{(h)};\mathcal{B})<\ii\,. $$
	\par The domain $\cB$ being bounded, in terms of a (not relabeled) subsequence $\{y_h\}\sb \cA^1(\cB,\gr^3)$,
	we find an a.e. approximately differentiable map $y\in
	L^1(\cB,\gr^3)$ such that $y_h\to y$ {\mx{strongly in }}$L^1(\cB,\gr^3)$ and
	for any multi-indices $\a$ and $\b$, with $\vert\a\vert+\vert\b\vert=3$, functions $v^\b_{\ol\a}\in L^1(\cB)$
	such that
	$$M^\b_{\ol\a}(\nabla y_h(x))\rightharpoonup v^\b_{\ol\a}(x)\qquad{\text{weakly in }}L^1(\cB)\,. $$
	Moreover, we get the bound $ \sup_h\GM(G_{y_h})<\ii$ on the mass of the i.m. rectifiable currents $G_{y_h}$ in $\cR_3(\cB\tim\gr^3)$ carried by the $y_h$ graphs, whereas the inequalities $\pi_\#|\pa G_{y_h}|\leq C \cdot \mu_{ V^{(h)} }$ imply the bound $\sup_h\GM((\pa G_{y_h})\pri\cB\tim\gr^3)<\ii$ on the boundary current masses.
	Therefore, Theorem~\ref{TclosG} yields
	$y\in\cA^1(\cB,\gr^3)$ and $v^\b_{\ol\a}(x)=M^\b_{\ol\a}(\nabla
	y(x))$ a.e in $\cB$, for every $\a$ and $\b$, whereas $G_{y_h}\rightharpoonup G_y$ weakly in $\cD_3(\cB\tim\gr^3)$; the current $G_y$ is i.m. rectifiable in
	$\cR_3(\cB\tim\gr^3)$, and the inequality $\pi_\#|\pa G_{y}|\leq C\cdot\m_{V}$ holds true.
	\par By taking into account that $\cH^{n-1}\pri S(y_h)\leq \m_{V^{(h)}}$ and $\sup_h\Vert y_h\Vert_\ii\leq K$, the compactness theorem in $SBV$ applies to the sequence $\{y_h\}\sb SBV(\cB,\gr^3)$, yielding the convergence $Dy_h\rhu Dy$ as measures, whereas $\cH^{n-1}\pri S(y)\leq \m_{V}$ and $\Vert y\Vert_\ii\leq K$, by lower semicontinuity, and clearly  $y=y_0$ on $\G_0$.
	\par By using the uniform bound
	$$ \sup_h \int_\cB\bigl(|\nabla y_h|^p+|\mathrm{cof}\nabla y_h|^q+|\det \nabla y_h|^r\bigr)dx<\ii \,,$$
	which follows from the lower bound imposed on the density $\hat W$ of the functional $F\mapsto J(F;\mathcal{B})$, we get
	$\nabla y_h\rhu \nabla y$ in $L^p(\cB,\gr^{3\tim 3})$, $\mathrm{cof}\nabla y_h\rhu \mathrm{cof}\nabla y$ in $L^q(\cB,\gr^{3\tim 3})$, and $\det\nabla y_h\rhu \det\nabla y$ in $L^r(\cB)$.
	\par Also, the inequalities $\cH^{n-1}\pri S(\mathrm{cof}\nabla y_h)\leq \m_{V^{(h)}}$ and the lower bound on $\hat W$ imply that the sequence $\{\mathrm{cof}\nabla y_h\}\sb GSBV(\cB,\gr^{3\tim 3})$ satisfies the inequality
	$$ \sup_h \Bigl( \int_\cB \bigl(|\mathrm{cof}\nabla y_h|^q+|\nabla[\mathrm{cof}\nabla y_h]|^q\bigr)\,dx +\cH^{n-1}(S({\mathrm{cof}\nabla y_h}))\Bigr) <\ii\,. $$
	Therefore, by Theorem~\ref{TGSBV} we infer that
	\begin{itemize}
		\item $\mathrm{cof}\nabla y\in GSBV(\cB,\gr^{3\tim 3})\,,$
		
		\item $\mathrm{cof}\nabla y_h\to\mathrm{cof}\nabla y$ in $L^q(\cB,\gr^{3\tim 3})\,,$
		
		\item $\nabla[\mathrm{cof}\nabla y_h]\rhu \nabla[\mathrm{cof}\nabla y]$ weakly in $L^q(\cB,\gr^{3\tim 3\tim 3})$, and
		
		\item $\cH^{n-1}\pri S(\mathrm{cof}\nabla y)\leq \m_V$.
	\end{itemize}
	
Similarly, the inequalities $\cH^{n-1}\pri S(\det\nabla y_h)\leq \m_{V^{(h)}}$ and the lower bound on $\hat W$ imply that the sequence
	$\{\det\nabla y_h\}\sb GSBV(\cB)$ satisfies the inequality
	$$ \sup_h \Bigl( \int_\cB \bigl(|\det\nabla y_h|^r+|\nabla[\det\nabla y_h]|^r\bigr)\,dx +\cH^{n-1}(S({\det\nabla y_h})) \Bigr) <\ii\,, $$
	so that Theorem~\ref{TGSBV} entails that
	\begin{itemize}
		\item $\det \nabla y\in GSBV(\cB)\,,$
		
		\item $\det\nabla y_h\to\det\nabla y$ in $L^r(\cB)\,,$
		
		\item $\nabla[\det\nabla y_h]\rhu \nabla[\det\nabla y]$ weakly in $L^r(\cB,\gr^{3})$, and
		
		\item $\cH^{n-1}\pri S(\det\nabla y)\leq \m_V$.
	\end{itemize}

 Arguing as in the proof of Theorem~\ref{Tgradpoly}, reported in reference \cite{KPS}, we obtain $\det\nabla y>0$ a.e. in $\cB$, and
	$(\det\nabla y)^{-1}\in L^s(\cB)$, whence we get  $(y,V)\in\cA=\cA_{\ol p,p,q,r,s,K,C}$.
	
	Finally, on account of the previous convergences, the gradient polyconvexity assumption implies the lower semicontinuity inequality
	$$ J(\nabla y;\mathcal{B})\leq\liminf_{h\to\ii}J(\nabla y_h;\mathcal{B}). $$
	Then,
	$$ \cF(y,V)\leq\liminf_{h\to\ii}\cF(y_h,V^{(h)}), $$
	which is the last step in the proof.
\end{proof}
\subsection{By avoiding self-penetration}

The restriction imposed to $\det \nabla y(x)$ ensures that the deformation locally preserves orientation.
However, we have also to allow possible self-contact between distant portions of the boundary preventing at the same time self-penetration of the matter.
To this aim, in 1987 P. Ciarlet and J. Ne\v{c}as proposed the introduction of an additional constraint, namely
$$  \int_{\cB'} \det \nabla y(x)\, dx \le \cL^3(\wid y(\wid\cB'))
$$
for any sub-domain $\cB'$ of $\cB$, where $\wid \cB'$ is intersection of $\cB'$ with the domain $\wid \cB$ of Lebesgue's representative $\wid y$ of $y$ \cite{CN}.

We adopt here a weaker constraint, introduced in 1989 by M. Giaquinta, G. Modica, and J. Sou\v{c}ek \cite{GMS-Arma} (see also \cite[Vol.~II,~Sec. 2.3.2]{GMS98}). It reads
$$
\int_{\cB} f(x,u(x))\,\det \nabla y(x)\, dx \le
\int_{\gr^3}\sup_{x\in\cB} f(x,y)\,dy\,, $$
for every compactly supported smooth function $f:\cB\tim\gr^3\to[0,+\ii)$.

We thus denote by $\wid\cA_{\ol p,p,q,r,s,K,C}$ the set of couples $(y,V)\in \cA_{\ol p,p,q,r,s,K,C}$ such that the deformation map $y$ satisfies the previous inequality.

Since that constraint is preserved by the weak convergence as currents $G_{y_h}\rhu G_y$ along minimizing sequences, arguing as in Theorem~\ref{Tgradpolyfrac}
we readily obtain the following existence result.
\begin{corollary}\label{Cgradpolyfrac} Under the previous assumptions, if the class $\wid\cA:=\wid\cA_{\ol p,p,q,r,s,K,C}$ of admissible couples $(y,V)$ is non-empty and
	$ \inf\{ \cF(y,V)\mid (y,V)\in \wid\cA\}<\infty $,
	then the minimum of the functional $(y,V)\mapsto \cF(y,V)$ is attained in $\wid\cA$.
\end{corollary}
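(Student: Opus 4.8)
The plan is to repeat the compactness-and-semicontinuity scheme of Theorem~\ref{Tgradpolyfrac}, now starting from a minimizing sequence $\{(y_h,V^{(h)})\}$ for $\cF$ inside the smaller class $\wid\cA$. Because $\wid\cA\sb\cA$, that scheme will deliver, along a not relabeled subsequence, a limit couple $(y,V)\in\cA$, the weak convergence of graph currents $G_{y_h}\rhu G_y$ in $\cD_3(\cB\tim\gr^3)$, and the inequality $\cF(y,V)\le\liminf_{h\to\ii}\cF(y_h,V^{(h)})$. The single new point to settle is that the limiting deformation $y$ inherits the Giaquinta--Modica--Sou\v{c}ek inequality, so that $(y,V)$ in fact belongs to $\wid\cA$; granted this, $(y,V)$ attains $\inf_{\wid\cA}\cF$ and we are done.

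First I would reinterpret the constraint as a statement about the graph current. For a compactly supported smooth $f:\cB\tim\gr^3\to[0,+\ii)$ I set
$$ \o_f:=f(x,y)\,dy^1\wedge dy^2\wedge dy^3\in\cD^3(\cB\tim\gr^3)\,. $$
By the area formula recalled earlier, the only minor $M^\b_{\ol\a}(\nabla u)$ contributing to $\langle G_u,\o_f\rangle$ is the one with $|\b|=3$, $|\a|=0$, which equals $\det\nabla u$; hence for every $u\in\cA^1(\cB,\gr^3)$
$$ \langle G_{u},\o_f\rangle=\int_\cB f(x,u(x))\,\det\nabla u(x)\,dx\,. $$
Thus the constraint is precisely the family of linear inequalities $\langle G_{y},\o_f\rangle\le\int_{\gr^3}\sup_{x\in\cB}f(x,y)\,dy$, one for each admissible $f$, whose right-hand sides are constants not depending on the deformation.

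The closure then comes for free. Each $y_h\in\wid\cA$ satisfies $\langle G_{y_h},\o_f\rangle\le\int_{\gr^3}\sup_{x\in\cB}f(x,y)\,dy$; since $\o_f$ is a fixed smooth compactly supported form, the already available convergence $G_{y_h}\rhu G_y$ yields $\langle G_{y_h},\o_f\rangle\to\langle G_{y},\o_f\rangle$, and the inequality survives the passage to the limit. As $f$ is arbitrary, $y$ satisfies the constraint and $(y,V)\in\wid\cA$. The hard part---really the only part not already contained in Theorem~\ref{Tgradpolyfrac}---is recognizing that the constraint is a \emph{continuous}, not merely lower-semicontinuous, functional of the current: this is exactly what permits keeping the direction of the inequality in the limit, and it holds because $\o_f$ is an admissible test form for current duality while the right-hand side is independent of $y$. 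Everything else (the bounds on minors, the $GSBV$ convergences, the varifold compactness) is reproduced verbatim from the proof of Theorem~\ref{Tgradpolyfrac}.
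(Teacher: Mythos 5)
Your proposal is correct and follows essentially the same route as the paper: the paper's own (very terse) proof likewise reduces everything to the argument of Theorem~\ref{Tgradpolyfrac} plus the observation that the Giaquinta--Modica--Sou\v{c}ek constraint is preserved under the weak convergence $G_{y_h}\rhu G_y$, which is exactly what your duality computation with the test form $\o_f=f(x,y)\,dy^1\wedge dy^2\wedge dy^3$ makes explicit. Your write-up in fact spells out the mechanism (the constraint is a family of linear inequalities in the current, tested against fixed compactly supported forms, hence continuous under weak convergence) that the paper leaves implicit.
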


\ \ \

\textbf{Acknowledgements}. This work has been developed within the
activities of the research group in \textquotedblleft Theoretical
Mechanics\textquotedblright\ of the \textquotedblleft Centro di Ricerca
Matematica Ennio De Giorgi\textquotedblright\ of the Scuola Normale
Superiore in Pisa. PMM wishes to thank the Czech Academy of Sciences for hosting him in Prague during February 2020 as a visiting professor. We acknowledge also the support of GA\v{C}R-FWF project 19-29646L (to MK),   GNFM-INDAM (to PMM), and GNAMPA-INDAM (to DM).

\end{document}


%
\subsection{Vector valued BV functions} Let $u:\O\to\gr^N$ be
a function in $BV(\O,\gr^N)$, i.e., $u=(u^1,\ldots u^N)$ with
all components $u^j\in BV(\O)$.
This means that for every $j$ there exists a vector valued measure
$Du^j=(D_1u^j,\ldots,D_nu^j)$ such that all the $D_iu^j$'s
are signed Radon measures and for any vector field
$\vf=(\vf^1,\ldots,\vf^n)\in C^\ii_c(\O,\gr^n)$
$$\langle Du^j,\vf \rangle:=\int_\O \vf\,dDu^j=-\int_\O
u\,\div\vf\,dx=:- \langle u,\div \vf \rangle\,, $$
where
$\div\vf:=\sum\nolimits_{i=1}^nD_i\vf^i$. If $u\in BV(\O,\gr^N)$,
one decomposes
$$Du^j=D^a u^j+D^s u^j $$
into its absolute continuous and singular parts. It turns out that
$u$ is approximately differentiable at almost every point
$x\in\O$, with
$$D^a u^j=\nabla u^j\,\cL^n $$
$\nabla u^j$ being the approximate gradient of $u^j$.
%
\par The {\em Jump set} of $u$ is the countably
$\cH^{n-1}$-rectifiable set $J_u$ in $\O$ given by the union of
the complements of the Lebesgue sets of the $u^j$'s. More
precisely,
$$x_0\notin J_u\Longleftrightarrow\exists\,
y\in\gr^N\quad{\text{s.t. }}\lim_{\r\to 0}{1\over\r^n}\int\nolimits_{B^n_\r(x_0)} \vert u(x)-y\vert\,dx=0\,. $$
Let $\n=\n_u(x)$ be a unit vector in $\gr^n$ orthogonal to $J_u$
at $\cH^{n-1}$-a.e. point $x\in J_u$. Let $u^{\pm}(x)$
denote the one-sided approximate limits of $u$ on $J_u$, so
that for $\cH^{n-1}$-a.e. point $x\in J_u$
$$\lim_{\r\to 0^+}\r^{-n}\int_{B^\pm_\r(x)}|u(x)-u^\pm(x)|\,dx=0\,, $$
where $B^\pm_\r(x):=\{y\in B^n_\r(x):\pm \langle
y-x,\n(x)\rangle\geq 0\}$. Notice that a change of sign of $\n$
induces a permutation of $u^+$ and $u^-$ and that only for scalar
functions there is a canonical choice of the sign of $\n$ which
ensures that $u^+(x)>u^-(x)$.
\par The distributional derivative of $u$ turns out to be decomposed into the sum of a ``gradient"
measure, of a ``jump" measure, concentrated on a set that is
$\s$-finite with respect to the $\cH^{n-1}$-measure, and of a
``Cantor-type" measure. More precisely, $$ Du=D^au+D^Ju+D^Cu\,,$$
where $D^au$, $D^Ju$, and $D^Cu$ are mutually singular,
$$D^au=\nabla u\,\cL^n\,,\qquad D^Ju=(u^+(x)-u^-(x))\otimes\n(x)\,\cH^{n-1}\pri
J_u\,, $$
$\nabla u$ being the approximate gradient of $u$,
compare \cite{AFP} \cite{GMSl1}.
\subsubsection{SBV functions} Free
discontinuity problems are characterized by a competition between
volume energies, concentrated on $n$-dimensional sets, and surface
energies, concentrated on $(n-1)$-dimensional sets. Probably, the
best-known example is the Mumford-Shah functional \cite{MSh}. To
study in a systematic way such a problem, E.~De Giorgi and
L.~Ambrosio introduced in \cite{DGA} the space of {\em special
functions of bounded variation}, $SBV$-functions, that are
$BV$-functions $u$ whose derivatives have null Cantor part.
The compactness and lower semicontinuity
results in $SBV$, see \cite{Am} \cite{A2} \cite{AM}, lead to an
existence theory for the weak formulation of certain free
discontinuity problems, compare \cite{AFP}. We recall that the
proof of the compactness theorem in \cite{A2} relies on a
generalization of the following characterization of
$SBV$-functions with $\cH^{n-1}$-rectifiable jump sets. Following
\cite{ABG}, we denote by ${\cT}(\O\tim\gr)$ the class of
$C^1$-functions \,$\vf(x,y)$\, such that \,$\vert\vf\vert+\vert
D\vf\vert$\, is bounded and the support of $\vf$\, is contained in
$K\tim\gr$ for some compact set $K\sb\O$.
\begin{proposition} Let $u\in BV(\O)$. Then $u\in SBV(\O)$ with finite jump,
$\cH^{n-1}(J_u)<\ii$, if and only if for every $i=1,\ldots,n$
there exists a Radon measure $\m_i$ in $\O\tim\gr$ such
that
$$
\int_\O\left(\frac{\pa\vf}{\pa
	x_i}(x,u(x))+ \frac{\pa\vf}{\pa
	y}(x,u(x))\nabla_i{u}(x)\right)\,dx=
\int_{\O\tim\gr}\vf\,d\m_i $$
for any \,$\vf\in{\cT}(\O\tim\gr)$. In this case, moreover, we have:
$$\m_i= -(Id\join u^+)_\#(\n_i\cH^{n-1}\pri
J_u)+(Id\join u^-)_\#(\n_i\cH^{n-1}\pri J_u)\,. $$
\end{proposition}

\end{document}